\newtheorem{theorem}{Theorem}[section]
\newtheorem{corollary}[theorem]{Corollary}
\newtheorem{remark}[theorem]{Remark}
\theoremstyle{definition}
\newtheorem{definition}[theorem]{Definition}
\newtheorem{example}[theorem]{Example}
\newtheorem*{agreement*}{Agreement}
\newcommand{\norm}[1]{\lVert#1\rVert}
\newcommand{\norml}[1]{\left\lVert#1\right\rVert}
\newcommand{\abs}[1]{\left|#1\right|}
\begin{document}
\begin{frontmatter}



\title{Square roots of $H$-nonnegative matrices}


\author[label1,label3]{D.B.~Janse van Rensburg}
\author[label1,label3]{M.~van~Straaten}
\author[label1,label3]{F.~Theron}
\author[label2]{C.~Trunk}

\address[label1]{School~of~Mathematical~and~Statistical~Sciences,
North-West~University,
Research Focus: Pure and Applied Analytics,
Private~Bag~X6001,
Potchefstroom~2520,
South Africa.
E-mail: \texttt{dawie.jansevanrensburg@nwu.ac.za, madelein.vanstraaten@nwu.ac.za, frieda.theron@nwu.ac.za}}
\address[label2]{Institut f\"ur Mathematik, Technische Universit\"at Ilmenau, Postfach 10 05 65, D-98684 Ilmenau, Germany.  E-mail:
    \texttt{carsten.trunk@tu-ilmenau.de}}
\address[label3]{DSI-NRF Centre of Excellence in Mathematical and Statistical Sciences (CoE-MaSS)}

\begin{abstract}
Roots of matrices are well-studied. The conditions for their
existence are understood: The block sizes of nilpotent Jordan blocks, arranged in pairs,
have to satisfy some simple algebraic property.

More interesting are structured roots
of structured matrices. Probably the best known example is the existence and uniqueness of positive definite square roots of a positive definite matrix. If one drops the
requirement of positive definiteness of the square root, it turns out that there exists an abundance of square roots.
Here a description of all canonical forms of all
 square roots is possible and is straight forward.

$H$-nonnegative matrices are $H$-selfadjoint and are
nonnegative with respect to an indefinite inner product with
Gramian $H$. An $H$-nonnegative matrix $B$ allows a decomposition in
a negative definite, a nilpotent $H$-nonnegative, and a positive definite matrix,
$B=B_- \oplus B_0 \oplus B_+$. The interesting part is $B_0$, as only
Jordan blocks of size one and two occur. Determining a square root of $B$ reduces
to determining a square root of each of $B_-$, $B_0$, and $B_+$. Here we investigate for an $H$-nonnegative matrix: its square roots without additional structure, as well as its structured square roots that are $H$-nonnegative or $H$-selfadjoint.

For these three classes of square roots of $H$-nonnegative matrices we show a simple criterion
for their existence and describe all possible canonical forms.
This is based mainly on known results but an important new part is that in all three cases we
describe all possible square roots of the nilpotent $H$-nonnegative matrix $B_0$ explicitly.
Moreover, we show how our results can be applied to the conditional and
unconditional stability of $H$-nonnegative square roots of $H$-nonnegative matrices,
where the explicit description of the square roots of $B_0$ is used.

\end{abstract}



\begin{keyword}
Indefinite inner products \sep Square roots \sep $H$-nonnegative matrices

\textit{AMS subject classifications:} 15A21 \sep 15A23 \sep 47B50

\end{keyword}

\end{frontmatter}


\section{Introduction}
The study of square roots of matrices dates back to 1858 \cite{Cayley}. Since 1858 numerous %
authors have studied square roots of matrices. For example, a necessary %
and sufficient condition for the existence of square roots of a complex matrix $A$ is given in \cite{CrossLan}. %
This condition relates to the dimensions of nullspaces of powers of $A$. Other authors %
such as \cite{BjorkHammar,Denman,High1,High2,NFB}, %
to mention but a few, focus on an efficient algorithm or a formula for square roots of a matrix.

The problem of finding roots of matrices where certain extra requirements or structure are %
imposed on the class of matrices studied and/or their roots, has also attracted a lot of attention. For example, \cite{AleSch} studies roots of $M$-matrices that are $M$-matrices %
themselves and \cite{FMMX} find results on Hamiltonian square roots of skew-Hamiltonian matrices.

Turning to indefinite inner product spaces, by definition  $[x,y]=(Hx,y)$ represents the indefinite inner product between vectors $x$ and $y$ in $\mathbb{C}^n$ where $H$ is some Hermitian and invertible matrix. Here $(x,y)$ is the usual Euclidean product in $\mathbb{C}^n$.  A complex ${n\times n}$ matrix $B$ is called \emph{$H$-selfadjoint} if $[Bx,y]=[x,By]$, that is, if $HB = B^*H$.
In \cite{GenTh} and \cite{MRR} $H$-selfadjoint square roots of $H$-selfadjoint matrices are studied. The canonical form of all $H$-selfadjoint matrices having $H$-selfadjoint square roots is derived, as well as the canonical form of the $H$-selfadjoint square roots.

A complex matrix $B$ is called \emph{$H$-nonnegative} if $[Bx,x]\geq 0$ for all $x\in\mathbb{C}^n$. The class of $H$-nonnegative matrices is a subclass of the class of $H$-selfadjoint matrices and has some very special properties, such as having a real spectrum and having Jordan chains of length at most two. We study square roots of $H$-nonnegative matrices and in Section~3 we find necessary and sufficient conditions for the existence of square roots. %
We also give explicit formulas for the square roots of nilpotent matrices. In Section~4 we characterize all $H$-selfadjoint square roots of $H$-nonnegative matrices and find characterizations of an %
$H$-nonnegative matrix with entries in $\mathbb{C}$ to have $H$-selfadjoint square roots, while in Section~5 we find conditions for an $H$-nonnegative matrix to have an $H$-nonnegative square root. Finally, Section~6 contains the theory regarding the stability of these $H$-nonnegative square roots.

\section{Preliminaries}
The Jordan form plays an important role in this paper.  We denote an $n\times n$ Jordan block corresponding to an eigenvalue $\lambda$ in a Jordan form by $J_n(\lambda)$ and the $n\times n$ sip matrix which has ones on the main anti-diagonal and zeros elsewhere by $Q_n$.

Let $B$ be a square matrix which at eigenvalue $\lambda$ has in its Jordan form Jordan blocks of sizes in decreasing order $(n_1,n_2,n_3,\ldots,n_r)$. We call this the \emph{Segre characteristic} of matrix $B$ corresponding to the eigenvalue $\lambda$ (see for example the book by Shapiro \cite{Shapiro}).

In the following theorem we recall the well-known canonical form given by \eqref{canonicalC} and \eqref{canonicalH} for a pair $(B,H)$, where $B$ is an $H$-selfadjoint matrix and $H$ is an invertible Hermitian matrix (see e.g.\ \cite{GenTh}).
\begin{theorem}\label{canonform}
Let $H$ be an invertible Hermitian $n\times n$ matrix, and let $B$ be an $n\times n$ complex $H$-selfadjoint matrix. Then there exists an invertible $n\times n$ matrix $S$ such that $S^{-1}BS$ and $S^*HS$ have the form
\begin{eqnarray}\label{canonicalC}
S^{-1}BS &=& J_{k_1}(\lambda_1) \oplus \cdots \oplus J_{k_{\alpha}}(\lambda_{\alpha})\nonumber \\
&\oplus& [J_{k_{\alpha+1}}(\lambda_{\alpha+1}) \oplus J_{k_{\alpha+1}}(\overline{\lambda}_{\alpha+1})] \oplus \cdots \oplus
[J_{k_{\beta}}(\lambda_{\beta}) \oplus J_{k_{\beta}}(\overline{\lambda}_{\beta})]
\end{eqnarray}
 where $\lambda_1,\hdots,\lambda_{\alpha}$ are real and $\lambda_{\alpha+1},\hdots,\lambda_{\beta}$ are nonreal with positive imaginary parts; and
\begin{equation}\label{canonicalH}
S^*HS = \varepsilon_1 Q_{k_1} \oplus \cdots \oplus \varepsilon_{\alpha}Q_{k_{\alpha}} \oplus Q_{2k_{\alpha+1}} \oplus \cdots \oplus Q_{2k_{\beta}}
\end{equation}
where $\varepsilon_1,\hdots,\varepsilon_{\alpha}$ are $\pm 1$. For a given pair $(B,H)$, the canonical form given by \eqref{canonicalC} and \eqref{canonicalH} is unique up to permutation of orthogonal components in \eqref{canonicalH} and the same simultaneous permutation of the corresponding blocks in \eqref{canonicalC}.
\end{theorem}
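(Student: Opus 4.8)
The plan is to build the canonical form from a spectral decomposition of $\mathbb{C}^n$ into root subspaces that are mutually $H$-orthogonal, and then to normalize $B$ and $H$ simultaneously on each piece. The starting point is the identity $HB=B^*H$, equivalently $B^*=HBH^{-1}$, which shows that $B$ and $B^*$ are similar; hence the spectrum of $B$ is invariant under complex conjugation and, for each eigenvalue $\lambda$, the Segre characteristic of $B$ at $\lambda$ coincides with that of $B$ at $\overline{\lambda}$. This already forces the nonreal eigenvalues to occur in conjugate pairs with matching block sizes, exactly as in \eqref{canonicalC}.

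Next I would prove the key orthogonality lemma: if $\lambda$ and $\mu$ are eigenvalues of $B$ with $\mu\neq\overline{\lambda}$, then the corresponding root subspaces are $H$-orthogonal, i.e.\ $[x,y]=0$ whenever $x$ lies in the root subspace at $\lambda$ and $y$ in the root subspace at $\mu$. This follows by an induction on the heights of generalized eigenvectors, using $[Bx,y]=[x,By]$, which converts a factor $(B-\lambda)$ acting on the left into a factor $(B-\overline{\lambda})$ acting on the right; annihilating $x$ by a power of $(B-\lambda)$ then forces $[x,(B-\overline{\lambda})^{p}y]=0$, and since $\mu\neq\overline{\lambda}$ the operator $(B-\overline{\lambda})$ is invertible on the $\mu$-root subspace, so the pairing vanishes identically. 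As a consequence, the real root subspaces are pairwise $H$-orthogonal and each is $H$-nondegenerate, while for a nonreal $\lambda$ the sum of the root subspaces at $\lambda$ and $\overline{\lambda}$ is $H$-nondegenerate with each summand $H$-neutral. This yields an $H$-orthogonal direct sum decomposition that reduces the problem to each spectral block in isolation.

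On a real block I would first shift by $\lambda I$ (which remains $H$-selfadjoint since $\lambda\in\mathbb{R}$) to reduce to a nilpotent $H$-selfadjoint matrix, and then construct, chain by chain, a Jordan basis that is simultaneously $H$-orthonormalized so that $H$ becomes $\varepsilon Q_k$ with a single sign $\varepsilon=\pm 1$ attached to each block. The sign is read off from the indefinite pairing between the top and bottom of a Jordan chain, which is nonzero by nondegeneracy, and a Gram--Schmidt-type correction removes the pairings between distinct chains. On a nonreal conjugate block, each root subspace is $H$-neutral and the two are dually paired by $H$; choosing a Jordan basis of the $\lambda$-subspace and its $H$-dual basis in the $\overline{\lambda}$-subspace puts $B$ into the paired Jordan form of \eqref{canonicalC} and $H$ into $Q_{2k}$, with no sign freedom because the pairing runs between distinct conjugate eigenvalues.

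For uniqueness, the Jordan structure in \eqref{canonicalC} is determined by $B$ alone, so only the signs $\varepsilon_i$ and the grouping into orthogonal components require argument; these are invariants of the pair $(B,H)$ under the admissible transformations $(B,H)\mapsto(S^{-1}BS,S^*HS)$, that is, the sign characteristic is preserved. The main obstacle is precisely the real nilpotent case: constructing a Jordan basis compatible with the indefinite form while keeping track of a consistent sign for each block, and then showing that this sign is independent of the choices made in the construction. This is the content of the sign characteristic, and it is where the bulk of the careful inductive bookkeeping lies.
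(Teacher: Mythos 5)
The paper gives no proof of Theorem~\ref{canonform} at all: it is recalled as a classical result with a citation (see \cite{GenTh}, or \cite{GLR}), so there is nothing in-paper to compare against. Your outline follows the standard route found in those sources --- conjugation symmetry of the spectrum from $B^*=HBH^{-1}$, $H$-orthogonality of root subspaces at eigenvalues $\lambda,\mu$ with $\mu\neq\overline{\lambda}$, reduction to a nilpotent $H$-selfadjoint matrix on each real spectral block, and a dual-basis pairing for each conjugate nonreal block --- and those parts of the argument are correct as sketched.

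However, the two hardest points are asserted rather than proved, and one of them is false as stated. First, in the real nilpotent case you claim the pairing between the top and bottom of a Jordan chain ``is nonzero by nondegeneracy.'' That fails for an arbitrary maximal chain: take $N=J_2(0)\oplus J_2(0)$ with $H=Q_2\oplus(-Q_2)$ (a legitimate $H$-selfadjoint nilpotent pair) and the chain generated by $x=e_2+e_4$; then $Nx=e_1+e_3$ and $[Nx,x]=1-1=0$ although $H$ is invertible. Nondegeneracy only guarantees that \emph{some} vector of maximal height $k$ satisfies $[N^{k-1}x,x]\neq 0$, and proving that (e.g.\ via nondegeneracy of the induced form on $\ker N^{k}/\ker N^{k-1}$) is exactly the inductive step you defer. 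Moreover, even with such an $x$ in hand, the Gram matrix of the chain $x,Nx,\dots,N^{k-1}x$ is a Hankel matrix with entries $[N^{m}x,x]$, not automatically the anti-diagonal $\varepsilon Q_k$; one must kill the intermediate moments $[N^{m}x,x]$, $m<k-1$, by a correction $x\mapsto x+c_1Nx+c_2N^2x+\cdots$ \emph{within} the chain --- this is not a Gram--Schmidt step between distinct chains, and it is where the real work lies. Second, your uniqueness argument is circular: saying the signs ``are invariants of the pair $(B,H)$ under the admissible transformations, that is, the sign characteristic is preserved'' restates the claim to be proved. Uniqueness requires an invariant characterization of the $\varepsilon_i$ independent of any construction, for instance through the signatures of the Hermitian forms $x\mapsto[(B-\lambda I)^{j}x,x]$ on suitable subspaces or quotients. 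As it stands, the proposal is a correct plan whose decisive steps --- existence of a normalizable chain and invariance of the signs --- are still open.
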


Observe that only the blocks in \eqref{canonicalH} related to blocks in \eqref{canonicalC} with a real eigenvalue possess a sign. The vector of signs $(\varepsilon_1,\ldots,\varepsilon_\alpha)$ is called the {\it sign characteristic} of $(B,H)$.

In the canonical form given by \eqref{canonicalC} and \eqref{canonicalH}, for any $i=1,\ldots,\alpha$, we say the block $J_{k_i}(\lambda_i)$ has \emph{positive sign characteristic} if $\varepsilon_i=1$,  and  \emph{negative sign characteristic} if $\varepsilon_i=-1$.

$H$-nonnegative matrices have the following characteristics (taken from \cite[Theorem 5.7.2]{GLR}).
\begin{theorem}\label{ThmHnonneg}
A matrix $B$ is $H$-nonnegative if and only if the following conditions hold:
\begin{enumerate}
\item[{\rm (i)}] $B$ is $H$-selfadjoint;
\item[{\rm (ii)}] $B$ has a real spectrum;
\item[{\rm (iii)}] the canonical form of the pair $(B,H)$ is
\begin{equation*}
S^{-1}BS=\bigoplus_{i=1}^qJ_1(\lambda_i)\oplus\bigoplus_{i=1}^{r-q}J_1(\lambda_{q+i})\oplus\bigoplus_{i=1}^sJ_1(0)\oplus\bigoplus_{i=1}^tJ_2(0),
\end{equation*}
\begin{equation*}
S^*HS=\bigoplus_{i=1}^qQ_1\oplus\bigoplus_{i=1}^{r-q}(-Q_1)\oplus\bigoplus_{i=1}^s\varepsilon_iQ_1\oplus\bigoplus_{i=1}^tQ_2,
\end{equation*}
where $\lambda_1,\ldots,\lambda_q>0,$ $\lambda_{q+1},\ldots,\lambda_r<0$, and $\varepsilon_i=\pm1$.
Thus $B$ has positive sign characteristic at a nonzero eigenvalue $\lambda$  if $\lambda>0$, and negative sign characteristic if $\lambda<0$; each Jordan block of size $2$ at the zero eigenvalue has positive sign characteristic.
\end{enumerate}
\end{theorem}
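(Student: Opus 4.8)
The plan is to prove both directions at once through the single congruence identity $S^{\ast}(HB)S=(S^{\ast}HS)(S^{-1}BS)$, using the observation that $B$ being $H$-nonnegative is exactly the statement that the matrix $HB$ is positive semidefinite. First I would settle condition (i). Since $[Bx,x]=(HBx,x)\ge 0$ is in particular real for every $x\in\mathbb{C}^n$, the standard fact that $(Mx,x)\in\mathbb{R}$ for all $x$ forces $M=M^{\ast}$ (recoverable by polarization) yields $HB=(HB)^{\ast}=B^{\ast}H$, so $B$ is $H$-selfadjoint. This in turn licenses Theorem~\ref{canonform}: choose $S$ so that $S^{-1}BS$ and $S^{\ast}HS$ have the forms \eqref{canonicalC} and \eqref{canonicalH}. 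As congruence by an invertible matrix preserves positive semidefiniteness, $B$ is $H$-nonnegative if and only if $S^{\ast}(HB)S=(S^{\ast}HS)(S^{-1}BS)$ is positive semidefinite. This product is block diagonal, each block being either $\varepsilon_i Q_{k_i}J_{k_i}(\lambda_i)$ at a real eigenvalue or $Q_{2k_j}\bigl(J_{k_j}(\lambda_j)\oplus J_{k_j}(\overline{\lambda}_j)\bigr)$ at a nonreal conjugate pair, so $H$-nonnegativity is equivalent to each such block being positive semidefinite on its own.

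The core of the argument is then a direct computation of these Hankel-type blocks, combined with one elementary fact: a Hermitian positive semidefinite matrix with a zero diagonal entry must have the entire corresponding row and column equal to zero, since its relevant $2\times2$ principal minor equals $-\lvert M_{ij}\rvert^2$. For a nonreal pair one computes $Q_{2k}\bigl(J_k(\lambda)\oplus J_k(\overline\lambda)\bigr)=\bigl(\begin{smallmatrix}0 & Q_kJ_k(\overline\lambda)\\ Q_kJ_k(\lambda) & 0\end{smallmatrix}\bigr)$, whose diagonal is entirely zero; positive semidefiniteness would then force $Q_kJ_k(\lambda)=0$, which is impossible. Hence no nonreal eigenvalues can occur, proving (ii).

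For a real eigenvalue, a short index computation shows that $Q_kJ_k(\lambda)$ carries $\lambda$ on the main anti-diagonal and $1$ on the next anti-diagonal. When $k\ge 3$ its $(k,k)$ entry is $0$ while the $(k-1,k)$ entry is $1$, contradicting the zero-diagonal fact, so $k\le 2$. For $k=2$ the block $\varepsilon\bigl(\begin{smallmatrix}0 & \lambda\\ \lambda & 1\end{smallmatrix}\bigr)$ is positive semidefinite only if $\varepsilon=1$ and $\lambda=0$, which is precisely the statement that each size-$2$ block sits at the zero eigenvalue with positive sign characteristic. For $k=1$ the block is the scalar $\varepsilon\lambda$, positive semidefinite if and only if $\varepsilon\lambda\ge 0$, i.e.\ positive sign at $\lambda>0$, negative sign at $\lambda<0$, and arbitrary sign at $\lambda=0$. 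Collecting these constraints reproduces exactly the canonical form asserted in (iii).

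The converse is the same computation read backwards: if (i)--(iii) hold, then every block of $S^{\ast}(HB)S$ is one of $\lambda_iQ_1$ with $\lambda_i>0$, $-\lambda_{q+i}Q_1$ with $\lambda_{q+i}<0$, the zero block $\varepsilon_iQ_1J_1(0)$, or $Q_2J_2(0)=\bigl(\begin{smallmatrix}0 & 0\\ 0 & 1\end{smallmatrix}\bigr)$, each of which is manifestly positive semidefinite; hence $HB$ is positive semidefinite and $B$ is $H$-nonnegative. The only genuinely laborious step is the bookkeeping in the block products $Q_kJ_k(\lambda)$, but the conceptual content collapses entirely to the congruence reduction together with the zero-diagonal fact, so I expect no serious obstacle.
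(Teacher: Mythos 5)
Your argument is sound, and a preliminary remark is in order: the paper does not prove Theorem~\ref{ThmHnonneg} at all --- it is quoted from Theorem~5.7.2 of \cite{GLR} --- so there is no internal proof to compare against. What you have written is a self-contained derivation of a cited result, and the route is a natural one: encode $H$-nonnegativity as positive semidefiniteness of the Hermitian matrix $HB$ (Hermitian by polarization, which is exactly condition (i)), pass to the canonical form of Theorem~\ref{canonform} through the congruence $S^*(HB)S=(S^*HS)(S^{-1}BS)$, and then test each diagonal block $\varepsilon_i Q_{k_i}J_{k_i}(\lambda_i)$, respectively $Q_{2k_j}\bigl(J_{k_j}(\lambda_j)\oplus J_{k_j}(\overline{\lambda}_j)\bigr)$, against the elementary fact that a Hermitian positive semidefinite matrix with a vanishing diagonal entry has the whole corresponding row and column equal to zero. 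This is close in spirit to the argument in \cite{GLR} (which likewise reduces to the canonical form and examines the form blockwise), and it buys complete elementarity: nothing is used beyond Theorem~\ref{canonform}, polarization, and $2\times 2$ principal minors. The exclusion of nonreal eigenvalues, the $k=1$ and $k=2$ analyses, and the converse direction are all correct as written.

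There is one indexing slip you should repair, although it does not damage the structure of the proof. For $k\geq 3$ you claim that the $(k-1,k)$ entry of $Q_kJ_k(\lambda)$ equals $1$. By your own computation ($\lambda$ on the anti-diagonal $i+j=k+1$, ones on the anti-diagonal $i+j=k+2$), that entry is $1$ only when $k=3$; for $k\geq 4$ it vanishes, since $(k-1)+k=2k-1>k+2$, so no contradiction arises from that particular entry. The entry you want is $(2,k)$ (equivalently $(k,2)$), which lies on the anti-diagonal $i+j=k+2$ and hence equals $1$ for every $k\geq 2$, while the diagonal entry $(k,k)$ is $0$ for every $k\geq 3$. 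Thus for $k\geq 3$ the block $\varepsilon Q_kJ_k(\lambda)$ has a zero diagonal entry in position $(k,k)$ but a nonzero entry $\pm 1$ in position $(2,k)$ of the same column; equivalently, its principal $2\times 2$ minor on rows and columns $\{2,k\}$ equals $-1<0$. Either formulation restores the contradiction with positive semidefiniteness and yields $k\leq 2$ exactly as you intended.
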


We frequently require Theorem~3.1 from \cite{MRR} in some of the sections and therefore present it here in an equivalent form without the ``moreover'' part.

\begin{theorem}\label{Thm3.1inMRR}
Let $B$ be an $H$-selfadjoint matrix. Then there exists an $H$-selfadjoint matrix $A$ such that $A^2=B$ if and only if the canonical form of  $(B,H)$ has the following properties:
\begin{enumerate}
\item[{\rm (i)}] the Jordan blocks corresponding to the negative eigenvalues exist in pairs and the two Jordan blocks in each pair have opposite sign characteristic;
\item[{\rm (ii)}] the Jordan blocks corresponding to the zero eigenvalue can be written as $J^{(1)}\oplus J^{(2)}\oplus J^{(3)}$, where $J^{(1)}$ is a direct sum of pairs $J_{p_i}(0)\oplus J_{p_i}(0)$, $J^{(2)}$ is a direct sum of pairs $J_{p_i}(0)\oplus J_{p_i-1}(0)$ and $J^{(3)}$ is a direct sum of $1\times 1$ blocks and where the blocks in each pair in $J^{(1)}$ have opposite sign characteristic and those in each pair in $J^{(2)}$  have the same sign characteristic.
\end{enumerate}
\end{theorem}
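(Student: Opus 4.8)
The plan is to exploit the canonical form of Theorem~\ref{canonform} for the pair $(A,H)$ and reduce the existence question to a blockwise analysis. If $A$ is $H$-selfadjoint with $A^2=B$, then $A$ and $B$ commute, so $A$ leaves every root subspace of $B$ invariant; since $\operatorname{spec}(A)^2=\operatorname{spec}(B)$, the splitting of $B$ into its positive, negative and zero spectral parts is matched by a corresponding splitting of $A$. It therefore suffices to analyse, for each canonical block $(J_m(\mu),\varepsilon Q_m)$ or nonreal pair block of $A$, the canonical form of the squared pair $(J_m(\mu)^2,\varepsilon Q_m)$, and then to read off the Jordan and sign data of $B$. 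Conversely, sufficiency will follow by reversing this construction block by block.

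For the nonzero eigenvalues I would argue as follows. For $\lambda>0$ the two real roots $\pm\sqrt{\lambda}$ are available and an $H$-selfadjoint root is immediate, so these blocks impose no condition. For $\lambda<0$ the only roots are the nonreal numbers $\pm i\sqrt{|\lambda|}$; since $A$ is $H$-selfadjoint, its nonreal eigenvalues must occur in conjugate pairs $J_k(\mu)\oplus J_k(\bar\mu)$ carrying the neutral Gramian $Q_{2k}$ of Theorem~\ref{canonform}. Using that $z\mapsto z^2$ is locally invertible away from $0$, one has $J_k(\mu)^2\cong J_k(\lambda)$ and $J_k(\bar\mu)^2\cong J_k(\lambda)$, so each such pair squares to two equal-sized blocks of $B$ at $\lambda$; computing the sign characteristic induced on $(J_k(\lambda)\oplus J_k(\lambda),Q_{2k})$ yields the signs $+1$ and $-1$. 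This gives exactly condition~(i).

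The zero eigenvalue is the heart of the matter. Here $A$ restricts to a nilpotent $H$-selfadjoint $A_0$ whose canonical blocks are $(J_m(0),\varepsilon Q_m)$, and I would use the classical identities $J_{2p}(0)^2\cong J_p(0)\oplus J_p(0)$ and $J_{2p-1}(0)^2\cong J_p(0)\oplus J_{p-1}(0)$. Thus an even block of size $2p$ contributes a pair $(p,p)$, an odd block of size $2p-1\ge 3$ contributes a pair $(p,p-1)$, and a block of size $1$ squares to itself. The remaining task is to track the sign characteristic through the square: an explicit congruence shows that the $(p,p)$ pairs arising from even blocks receive opposite signs (feeding $J^{(1)}$), the $(p,p-1)$ pairs from odd blocks receive equal signs (feeding $J^{(2)}$), and the surviving singletons form $J^{(3)}$. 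This is precisely condition~(ii).

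The main obstacle will be the sign-characteristic bookkeeping, i.e.\ producing the explicit transformation $S$ that simultaneously puts $J_m(0)^2$ into Jordan form and $\varepsilon Q_m$ into the canonical Gramian, and then reading the signs $\varepsilon_j$ off the transformed form; the analogous computation for the neutral pair at $\lambda<0$ is of the same flavour. Once these local normalizations are established, necessity follows by assembling the blocks of $A$, and sufficiency follows by inverting each local model: given $B$ in the stated form, one builds an even nilpotent block from each opposite-sign $(p,p)$ pair, an odd block from each equal-sign $(p,p-1)$ pair, a size-$1$ block from each singleton, a conjugate pair $J_k(\pm i\sqrt{|\lambda|})$ from each opposite-sign pair at a negative eigenvalue, and a real root at each positive eigenvalue, thereby producing an $H$-selfadjoint $A$ with $A^2=B$.
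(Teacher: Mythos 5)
You should first be aware that the paper contains no proof of this statement: it is imported verbatim (minus the ``moreover'' part) from Theorem~3.1 of \cite{MRR}, and the authors only cite it. So there is no internal proof to compare your attempt against; I can only judge your proposal against the statement itself. Judged that way, your strategy is the natural one and its skeleton is sound: pass to the canonical form of $(A,H)$ from Theorem~\ref{canonform}, square block by block using $J_k(\mu)^2\sim J_k(\mu^2)$ for $\mu\neq 0$, $J_{2p}(0)^2\sim J_p(0)\oplus J_p(0)$ and $J_{2p-1}(0)^2\sim J_p(0)\oplus J_{p-1}(0)$, track the signs, and invoke the uniqueness part of Theorem~\ref{canonform} for $(B,H)$; sufficiency then follows by reversing the block-level construction. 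Moreover, every sign outcome you assert is in fact true: even nilpotent blocks produce $(p,p)$ pairs with opposite signs, odd nilpotent blocks produce $(p,p-1)$ pairs both carrying the sign $\varepsilon$ of the original block, and a conjugate pair at $\pm i\sqrt{\abs{\lambda}}$ produces two $J_k(\lambda)$ blocks, $\lambda<0$, with opposite signs.

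What keeps this from being a proof is that the step you defer as ``the main obstacle'' is precisely the mathematical content of the theorem, and it must actually be carried out; fortunately it is a finite computation. For $B=J_m(0)^2$ with $H=\varepsilon Q_m$ the Jordan chains are the odd-indexed and even-indexed standard basis vectors, and $[e_i,e_j]=\varepsilon\delta_{i+j,m+1}$. If $m=2p-1$ the two chains are $H$-orthogonal and their Gram matrices are exactly $\varepsilon Q_p$ and $\varepsilon Q_{p-1}$, giving equal signs $\varepsilon$. If $m=2p$ each chain is $H$-neutral and the Gramian in the chain basis is $\varepsilon\left[\begin{smallmatrix}0&Q_p\\ Q_p&0\end{smallmatrix}\right]$; conjugating by $T=\tfrac{1}{\sqrt{2}}\left[\begin{smallmatrix}I&I\\ -I&I\end{smallmatrix}\right]$, which commutes with $J_p(0)\oplus J_p(0)$, turns this into $\varepsilon\left(\left(-Q_p\right)\oplus Q_p\right)$, i.e.\ opposite signs; the same hyperbolic-coupling computation settles the conjugate pairs at negative eigenvalues (note that signature counting alone suffices only for odd $p$, so this congruence, or something like it, is genuinely needed). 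A second, smaller omission: in the sufficiency direction you only build roots over the positive, negative and zero spectral parts, but the theorem concerns a general $H$-selfadjoint $B$, which may also have nonreal conjugate-pair blocks $\left(J_k(\nu)\oplus J_k(\bar{\nu}),Q_{2k}\right)$; since these carry no sign characteristic, they always admit an $H$-selfadjoint square root (square any root $\mu$ of $\nu$ in the same block structure and use uniqueness of the canonical form), but this case has to be stated for the ``if'' direction to be complete. Neither issue is conceptual; with these computations written out, your outline becomes a correct proof.
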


\section{Square roots}\label{SectionSqR}
In this section we describe all square roots of $H$-nonnegative matrices, i.e.\ the collection of all matrices $A$ such that $A^2=B$ and $B$ is $H$-nonnegative. Without loss of generality we always assume $B$ to be in Jordan form. The following result is well-known (see e.g.\ \cite{BorweinR}).

\begin{theorem}\label{ThmGenDecomp}
Let $B$ be $H$-nonnegative and in Jordan form. Write $B$ as
\begin{equation}\label{eqBdecomp}
B=B_-\oplus B_0\oplus B_+,
\end{equation}
where $\sigma(B_{\pm})\subset\mathbb{R}^{\pm}$ and $\sigma(B_0)=\{0\}$. A matrix $A$ is a square root of $B$ if and only if there exist square roots $A_-$, $A_0$, and $A_+$ of $B_-$, $B_0$, and $B_+$, respectively, such that
\begin{equation}\label{eqAdecomp}
A=A_-\oplus A_0\oplus A_+.
\end{equation}
\end{theorem}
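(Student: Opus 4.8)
The plan is to separate the two implications, the forward (``if'') direction being a one-line verification and the reverse (``only if'') direction carrying all the content. For ``if'', I would note that if $A=A_-\oplus A_0\oplus A_+$ with $A_-^2=B_-$, $A_0^2=B_0$, and $A_+^2=B_+$, then squaring respects the block-diagonal structure, so $A^2=A_-^2\oplus A_0^2\oplus A_+^2=B_-\oplus B_0\oplus B_+=B$; nothing more is required.

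For ``only if'', the key observation is that every square root $A$ of $B$ commutes with $B$, since $AB=A\cdot A^2=A^3=A^2\cdot A=BA$. I would then invoke the elementary fact that a matrix commuting with $B$ leaves each generalized eigenspace of $B$ invariant: if $x\in\ker(B-\lambda I)^n$ then $(B-\lambda I)^n Ax=A(B-\lambda I)^n x=0$, so $Ax\in\ker(B-\lambda I)^n$ as well. Grouping the eigenvalues by sign and setting
\[
\mathcal{L}_-=\bigoplus_{\lambda<0}\ker(B-\lambda I)^n,\qquad \mathcal{L}_0=\ker B^n,\qquad \mathcal{L}_+=\bigoplus_{\lambda>0}\ker(B-\lambda I)^n,
\]
it follows that $\mathcal{L}_-$, $\mathcal{L}_0$, and $\mathcal{L}_+$ are each $A$-invariant. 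Since $\sigma(B_-)$, $\{0\}$, and $\sigma(B_+)$ are pairwise disjoint, these three subspaces are independent and together span $\mathbb{C}^n$, and because $B$ is assumed to be in Jordan form they are precisely the coordinate subspaces on which $B_-$, $B_0$, and $B_+$ act.

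Consequently $A$ is block diagonal with respect to this decomposition, say $A=A_-\oplus A_0\oplus A_+$ with $A_\pm=A|_{\mathcal{L}_\pm}$ and $A_0=A|_{\mathcal{L}_0}$. Comparing the diagonal blocks in $A^2=B$ then gives $A_-^2=B_-$, $A_0^2=B_0$, and $A_+^2=B_+$, as claimed.

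The step I expect to be the main obstacle --- or at least the one most in need of care --- is ruling out off-diagonal blocks of $A$ linking the three spectral components. A priori a square root can introduce new eigenvalues, since each eigenvalue $\lambda$ of $B$ admits the two square roots $\pm\sqrt{\lambda}$, so one might fear that $A$ mixes the blocks. This worry is dispelled by the invariance-of-root-subspaces argument above: commutativity $AB=BA$ confines $A$ to each generalized eigenspace of $B$ irrespective of the spectrum $A$ itself carries there. The only genuine hypothesis being used is that the three spectral groups are disjoint (in particular $0\notin\sigma(B_\pm)$), which guarantees that no root subspace is shared and makes the coordinate identification of $\mathcal{L}_-$, $\mathcal{L}_0$, $\mathcal{L}_+$ immediate once $B$ is in Jordan form.
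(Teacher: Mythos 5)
Your proof is correct and follows essentially the same route as the paper: the forward direction is the same direct computation, and the converse uses $AB=BA$ (via $AB=A^3=BA$) to force $A$ into block-diagonal form conformal with the spectral decomposition of $B$, after which comparing blocks in $A^2=B$ finishes the argument. The only difference is that where the paper cites Gantmacher (Theorem 3, Chapter VIII) for the fact that a matrix commuting with $B$ splits along $B$'s spectral components, you prove this inline via invariance of the generalized eigenspaces $\ker(B-\lambda I)^n$, which makes the proof self-contained but is otherwise the same idea.
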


\begin{proof}
The partition of $B$ follows from Theorem~\ref{ThmHnonneg}. Given square roots $A_-$, $A_0$, and $A_+$ of $B_-$, $B_0$, and $B_+$, respectively, it is obvious that $A_-\oplus A_0\oplus A_+$ is a square root of $B$. We show the converse. Let $A$ be a square root of $B$, i.e.\ $A^2=B$. Therefore, $A$ and $B$ commute and, by \cite[Theorem 3 in Chapter VIII]{Gantmacher}, $A$ allows a representation as in \eqref{eqAdecomp}. As $A^2=A^2_-\oplus A^2_0\oplus A^2_+$ we conclude that $A_-$, $A_0$, and $A_+$ are square roots of $B_-$, $B_0$, and $B_+$, respectively.
\end{proof}

It follows from Theorem~\ref{ThmHnonneg} that $B_-$ and $B_+$ are diagonalizable and, hence, they have a square root. Therefore a square root of $B$ exists if and only if a square root of $B_0$ exists according to Theorem~\ref{ThmGenDecomp}. For this reason, we focus on the case where $B=B_0$ is a nilpotent $H$-nonnegative matrix. Let $(n_1,n_2,\ldots,n_r)$ be the Segre characteristic of $B_0$. We call a collection $\mathcal{S}$ of pairs of numbers from $\mathcal{N}=\{n_i\}_{i=1}^r\cup \{0\}$ a \emph{Segre pairing} if:
\begin{itemize}
\item each entry in the Segre characteristic appears in one and only one pair of $\mathcal{S}$;
\item each pair in $\mathcal{S}$ consists of two entries from the Segre characteristic or one entry from the Segre characteristic and one zero, with the first entry greater or equal to the second entry;
\item the absolute difference between the two entries in each pair is at most one;
\item the pairs are arranged lexicographically.
\end{itemize}
Note that a pair of the form $(4,2)$, $(2,0)$ or $(0,0)$ is not a member of a Segre pairing. In \cite{BorweinR} it is shown that a matrix $B_0$ has a square root if and only if there exists a Segre pairing. By Theorem~\ref{ThmHnonneg} we know that for each pair $(m,n)$ in a Segre pairing of an $H$-nonnegative matrix $B_0$, we have $m\leq 2$, $n\leq 2$ and $\abs{m-n}\leq 1$. This leads to the following characterization of square roots of a nilpotent $H$-nonnegative matrix.

\begin{theorem}\label{thSqRConditions}
Let $B_0$ be a nilpotent $H$-nonnegative matrix. Then the following are equivalent:
\begin{enumerate}
\item[{\rm (i)}] $B_0$ has a square root;
\item[{\rm(ii)}] There exists a Segre pairing;
\item[{\rm(iii)}] In the Segre characteristic of $B_0$, either there is an even number of entries equal to two, or there is an odd number of entries equal to two and at least one entry equal to one.
\end{enumerate}
\end{theorem}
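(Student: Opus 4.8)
The plan is to prove the chain of equivalences by leaning on the already-stated result from \cite{BorweinR} that $B_0$ has a square root if and only if a Segre pairing exists; this immediately gives the equivalence of (i) and (ii), so the substance of the argument lies in showing that (ii) is equivalent to the purely arithmetic condition (iii). The key simplifying observation, which I would establish first, is that for an $H$-nonnegative nilpotent matrix $B_0$ every entry of its Segre characteristic is either $1$ or $2$. This is immediate from Theorem~\ref{ThmHnonneg}, since the canonical form of an $H$-nonnegative matrix admits only Jordan blocks $J_1(0)$ and $J_2(0)$ at the zero eigenvalue. Thus the Segre characteristic consists solely of twos and ones, and a Segre pairing must assemble these into pairs whose entries differ by at most one.

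Next I would analyze which pairs are admissible and carry out a simple counting argument for the equivalence of (ii) and (iii). Since the only available values are $2$ and $1$ (together with the auxiliary $0$), and since the rules forbid pairs such as $(2,0)$ and $(0,0)$ while permitting $(2,2)$, $(2,1)$, $(1,1)$, and $(1,0)$, the twos can only be consumed by pairs of type $(2,2)$ or $(2,1)$. I would argue as follows. Let $a$ be the number of entries equal to two and $b$ the number equal to one. A pair $(2,1)$ uses one two and one one; a pair $(2,2)$ uses two twos. If $a$ is even, the twos can be exhausted by $(2,2)$ pairs, after which the ones are freely paired off using $(1,1)$ and, if necessary, a single $(1,0)$; hence a pairing always exists, matching case (iii). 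If $a$ is odd, then after removing $(2,2)$ pairs exactly one two remains, and the only way to consume it is with a $(2,1)$ pair, which requires at least one entry equal to one, i.e.\ $b\geq 1$; this is precisely the second alternative in (iii). Conversely, if $a$ is odd and $b=0$, no admissible pair can absorb the leftover two, so no Segre pairing exists.

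To make the forward and reverse directions fully explicit, I would exhibit a concrete pairing whenever (iii) holds and show the obstruction whenever (iii) fails. For the existence direction: group the twos into $\lfloor a/2\rfloor$ pairs $(2,2)$; if $a$ is odd, pair the remaining two with one of the ones as $(2,1)$ (possible since $b\geq 1$); then pair the remaining $b$ or $b-1$ ones among themselves as $(1,1)$, using one final $(1,0)$ pair if an odd number of ones is left over. The lexicographic ordering and the uniqueness conventions in the definition of a Segre pairing are purely cosmetic and do not affect existence, so I would not dwell on them. For the non-existence direction, I would simply observe that any pair containing a two must be $(2,2)$ or $(2,1)$, so summing the twos consumed over all pairs shows that the parity of $a$ forces at least one $(2,1)$ pair when $a$ is odd, which is impossible without a one to spare.

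The main obstacle, though it is a mild one, is ensuring completeness of the case analysis on admissible pair types: one must verify carefully that the constraints in the definition of Segre pairing (absolute difference at most one, together with the explicit exclusion of $(2,0)$ and $(0,0)$) genuinely rule out every way of pairing a leftover two other than $(2,2)$ or $(2,1)$. Once the admissible pairs are correctly enumerated, the parity argument is routine, and the equivalence of (i), (ii), and (iii) follows at once.
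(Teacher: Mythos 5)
Your proposal is correct and follows essentially the same route as the paper: the equivalence of (i) and (ii) is taken from \cite{BorweinR}, (ii) implies (iii) follows from the admissible pair types (your parity argument simply makes explicit what the paper attributes to the definition of a Segre pairing), and (iii) implies (ii) is proved by the same explicit construction of pairing the twos, using one $(2,1)$ pair when their number is odd, and pairing the remaining ones as $(1,1)$ or $(1,0)$. Your write-up is more detailed than the paper's but adds no genuinely different idea.
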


\begin{proof}
The items (i) and (ii) are equivalent, see \cite{BorweinR}.

By the definition of Segre pairing, assertion (ii) implies assertion (iii).

For the proof that (iii) implies (ii), we pair all the entries in the Segre characteristic equal to two. If there is an odd number of entries equal to two, pair one 2 in the Segre characteristic with a 1. Pair the rest of the entries equal to one with other entries equal to one or with a zero. This forms a Segre pairing.
\end{proof}

In the following theorem we describe all square roots of a nilpotent $H$-nonnegative matrix
related to a pair in $\mathcal{S}$, which can be made explicit. As a by-product we also
obtain the Jordan form.

\begin{theorem}\label{ThmDescrSquareRoot}
Let $B_0$ be a nilpotent $H$-nonnegative matrix in Jordan form which has a Segre pairing $\mathcal{S}$.
Any pair of $\mathcal{S}$ is of the form
\begin{equation}\label{Porque}
(2,2),\, (2,1),\, (1,1) \mbox{ or } (1,0).
\end{equation}
These pairs correspond to Jordan blocks of the form
$J_2(0)\oplus J_2(0),\, J_2(0)\oplus J_1(0),\, J_1(0)\oplus J_1(0)$ and $J_1(0)$, respectively, in the matrix $B_0$. The square roots of each possible pair in $\mathcal{S}$ are as follows:
\begin{enumerate}
\item[\rm (i)] The square root of $J_1(0)=[0]$, which is associated with the pair $(1,0)$, is equal to $[0]$. Its Jordan form is $J_1(0)$ again.
\item[\rm (ii)] The square root of $J_1(0)\oplus J_1(0)$, which is associated with the pair $(1,1)$, is
\begin{equation}\label{eq11genform}
\begin{bmatrix}
\alpha & \frac{-\alpha^2}{\beta} \\ \beta & -\alpha
\end{bmatrix}, \textit{ for any complex numbers }\alpha,\beta\textit{ with }\beta\neq0,
\end{equation}
\begin{equation}\label{eq11genform2}
\textit{or } \begin{bmatrix}
0 & \alpha\\ 0 & 0
\end{bmatrix}, \textit{ for any complex number }\alpha.
\end{equation}
The Jordan form of all matrices in \eqref{eq11genform} and  \eqref{eq11genform2} is $J_2(0)$, except in the case in \eqref{eq11genform2} where $\alpha=0$. In this case, the Jordan form is $J_1(0)\oplus J_1(0)$.
\item[\rm (iii)] The square root of $J_2(0)\oplus J_1(0)$, which is associated with the pair $(2,1)$, is
\begin{equation}\label{eq21genform}
\begin{bmatrix}
0&\alpha & \beta \\ 0&0&0 \\ 0&\frac{1}{\beta}&0
\end{bmatrix},\textit{ for complex numbers }\alpha\textit{ and }\beta\neq0.
\end{equation}
The Jordan form of all matrices in \eqref{eq21genform} is $J_3(0)$.
\item[\rm (iv)] The square root of $J_2(0)\oplus J_2(0)$, which is associated with the pair $(2,2)$, is
\begin{equation}\label{eq22genform}
\begin{bmatrix}
-\alpha_3&-\alpha_4&\frac{-\alpha_3^2}{\alpha_1}&\beta\\
0&-\alpha_3&0&\frac{-\alpha_3^2}{\alpha_1} \\
\alpha_1 & \alpha_2 & \alpha_3&\alpha_4 \\
0&\alpha_1&0&\alpha_3
\end{bmatrix},
\end{equation}
for a nonzero $\alpha_1$ and where $\beta=\frac{1}{\alpha_1^2}(\alpha_1+\alpha_3^2\alpha_2-2\alpha_1\alpha_3\alpha_4)$, or
\begin{equation}\label{eq22genform2}
\begin{bmatrix}
0 & \gamma_1 & \gamma_2 & \gamma_3 \\
0 & 0 & 0 & \gamma_2 \\
0 & \frac{1}{\gamma_2} & 0 & -\gamma_1 \\
0 & 0 & 0 & 0
\end{bmatrix},
\end{equation}
for complex numbers $\gamma_i$, and nonzero $\gamma_1$,$\gamma_2$. The Jordan form of all matrices in \eqref{eq22genform} and \eqref{eq22genform2} is $J_4(0)$.
\end{enumerate}
\end{theorem}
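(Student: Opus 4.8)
The plan is to treat the four cases separately but to organise each around the same three tasks: (a)~verify by direct substitution that the displayed matrices square to the indicated block; (b)~prove that \emph{every} square root of that block has the displayed form; and (c)~read off the Jordan form. Task~(a) is routine matrix multiplication. That the pairs of $\mathcal{S}$ can only be $(2,2),(2,1),(1,1)$ or $(1,0)$, with the stated block correspondence, is immediate from Theorem~\ref{ThmHnonneg} (the nilpotent part $B_0$ has blocks of size at most two) together with the defining properties of a Segre pairing (the two entries differ by at most one and the first is at least the second, which rules out $(2,0)$ and $(0,0)$). The substance is the completeness statement~(b).

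A single observation drives both (b) and (c). If $A^2=B_0$ with $B_0$ nilpotent then $A$ is nilpotent, and since $B_0$ has nilpotency index at most two a rank/index count fixes the Jordan type of $A$: in (i) $A=[0]$; in (ii) $A$ is an arbitrary nilpotent $2\times2$ matrix, of type $J_2(0)$ unless $A=0$; in (iii) the relations $A^3=0$ and $A^2=B_0\neq0$ force $A\sim J_3(0)$; and in (iv) the only $4\times4$ nilpotent type whose square has rank two is $J_4(0)$, since $J_3(0)\oplus J_1(0)$ squares to rank one and all lower types square to zero. This already yields every Jordan-form assertion in~(c). For~(b) I also record that, in the standard basis, $\ker A^2=\ker B_0$ and $\operatorname{ran}A^2=\operatorname{ran}B_0$, and that for a single block $J_k(0)$ one has $\ker A^j=\operatorname{ran}A^{k-j}$; these identities locate $\ker A$ among the standard basis vectors and turn the equations $A^2e_i=B_0e_i$ into a short solvable system.

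Cases (i) and (ii) are elementary: $[0]$ is the only $1\times1$ square root of $[0]$, and a $2\times2$ matrix with $A^2=0$ has zero trace and determinant, so with entries $a,b,c,d$ one gets $d=-a$ and $bc=-a^2$; the split $c\neq0$ (then $b=-a^2/c$) versus $c=0$ (forcing $a=0$) reproduces exactly \eqref{eq11genform} and \eqref{eq11genform2}. For (iii) I would use the structural facts above: since $\ker A=\operatorname{ran}A^2=\operatorname{span}\{e_1\}$ we have $Ae_1=0$; since $e_3\in\ker A^2\setminus\ker A$ we have $Ae_3=\beta e_1$ with $\beta\neq0$; and imposing $A^2e_2=B_0e_2=e_1$ on the single remaining column forces $Ae_2=\alpha e_1+\beta^{-1}e_3$, which is precisely \eqref{eq21genform}.

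The main obstacle is case (iv). Here $\ker A$ is a one-dimensional subspace of $\operatorname{span}\{e_1,e_3\}=\ker A^2=\operatorname{ran}A^2$, and $A$ carries $\operatorname{ran}A^2$ onto $\operatorname{ran}A^3=\ker A$, so $Ae_1,Ae_3\in\ker A$. The decisive case split, mirroring the $c\neq0$/$c=0$ dichotomy of (ii), is whether $e_1\in\ker A$. If $Ae_1=0$ then imposing $A^2e_j=B_0e_j$ column by column yields family \eqref{eq22genform2}; if $Ae_1\neq0$ then $Ae_1=-\alpha_3e_1+\alpha_1e_3$ with $\alpha_1\neq0$ (the condition $\alpha_1\neq0$ being exactly $e_1\notin\ker A$), and the same analysis, with $\beta$ forced by the last equation $A^2e_4=e_3$, yields \eqref{eq22genform}. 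The delicate part is the bookkeeping in this second subcase, where every column of $A$ is generically nonzero; an equivalent but equally laborious route is to fix one square root $A_0$ and describe all others as $SA_0S^{-1}$ with $S$ in the centraliser of $B_0$, using that any two square roots share the Jordan type $J_4(0)$ and that conjugating $A_0^2=B_0$ shows such $S$ commutes with $B_0$. Either way, completeness follows once the two subcases are seen to be exhaustive.
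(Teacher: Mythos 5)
Your proposal is correct and would yield a complete proof, but it is organized quite differently from the paper's. The paper's proof is elimination from start to finish: for each pair it introduces every entry of $A$ as an unknown via $Ae_j=\alpha_je_1+\beta_je_2+\cdots$, multiplies the relations $A^2e_j=B_0e_j$ repeatedly by $A$ to kill coefficients, and only afterwards reads off the Jordan forms by exhibiting explicit Jordan chains. You instead establish the Jordan type of $A$ \emph{first} (nilpotency of $A$ plus the rank/index count over the possible nilpotent types), and then use the identities $\ker A^2=\ker B_0$, $\operatorname{ran}A^2=\operatorname{ran}B_0$, and $\ker A^j=\operatorname{ran}A^{k-j}$ for a matrix similar to a single block $J_k(0)$ to locate $\ker A$ among the standard basis vectors, so that only a few unknowns survive in the column equations; in (ii) the trace/determinant description of $2\times2$ nilpotents replaces the paper's elimination altogether. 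Your case splits match the paper's exactly ($c\neq0$ vs.\ $c=0$ in (ii) is the paper's $\beta_1\neq0$ vs.\ $\beta_1=0$; $e_1\notin\ker A$ vs.\ $e_1\in\ker A$ in (iv) is the paper's $\gamma_1\neq0$ vs.\ $\gamma_1=0$), so what your route buys is shorter computations and the Jordan-form assertions for free, while the paper's route stays entirely elementary and needs no structural lemmas. Two details to supply when writing case (iv) up fully: with $e_1\notin\ker A$ you must still pin down the third column --- from $Ae_3\in\ker A=\operatorname{span}\{Ae_1\}$ write $Ae_3=\mu Ae_1$, and $\mu=\alpha_3/\alpha_1$ is forced because the restriction of $A$ to the invariant subspace $\operatorname{span}\{e_1,e_3\}$ is nilpotent, hence traceless --- after which the equations $A^2e_2=e_1$ and $A^2e_4=e_3$ do close the system, so the bookkeeping you defer is genuinely routine. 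Also, in the subcase $Ae_1=0$ your column-by-column solution produces \eqref{eq22genform2} with the $(1,2)$-entry arbitrary; this agrees with the paper's own proof (where that entry, called $\alpha_2$ there, is unrestricted) rather than with the condition ``nonzero $\gamma_1$'' in the theorem statement, which appears to be an over-restriction, so your slightly larger family is not an error on your part.
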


\begin{proof}
The first statement follows from the form of $B_0$ according to
Theorem~\ref{ThmHnonneg}.

Part (i) is trivial since the square root of a $1\times 1$ zero matrix is the $1\times 1$ zero matrix.

Part (ii). Consider a nilpotent matrix in Jordan form with Segre characteristic $(1,1)$, i.e.\ the $2\times 2$ zero matrix. Let $A$ be a square root, that is, $A^2=\left[\begin{smallmatrix}
0 & 0 \\ 0 & 0
\end{smallmatrix}\right]$. Then for the two unit vectors $e_1,e_2$ in $\mathbb{C}^2$ we have
$A^2e_1=0$ and $A^2e_2=0$. Let
\begin{equation}\label{eq2Aej}
\begin{split}
Ae_1=\alpha_1 e_1+\beta_1e_2,\\
Ae_2=\alpha_2e_1+\beta_2e_2,
\end{split}
\end{equation}
where the coefficients $\alpha_j$ and $\beta_j$ are complex numbers. Multiply these equations by $A$ and use them again. Then we obtain the equations
\begin{equation*}
\begin{split}
\alpha_1^2+\alpha_2\beta_1=0,\\
\beta_2^2+\alpha_2\beta_1=0,\\
\beta_1(\alpha_1+\beta_2)=0,\\
\alpha_2(\alpha_1+\beta_2)=0.
\end{split}
\end{equation*}
Assume that $\beta_1$ is nonzero, then $\alpha_1=-\beta_2$ and $\alpha_2=-\beta_2^2/\beta_1$. This gives the form of any square root of the zero matrix of order $2$:
\begin{equation*}
\begin{bmatrix}
-\beta_2 & \frac{-\beta_2^2}{\beta_1}\\
\beta_1 & \beta_2
\end{bmatrix},
\end{equation*}
which shows \eqref{eq11genform}. Moreover, \eqref{eq2Aej} implies $Ae_1\neq 0$. As $A^2e_1=0$ we see that the Jordan form of the matrix in \eqref{eq11genform} is $J_2(0)$.
Note that if $\beta_1=0$, then $\alpha_1=\beta_2=0$, and the square root is
\begin{equation*}
\begin{bmatrix}
0 & \alpha_2 \\
0 & 0
\end{bmatrix},
\end{equation*}
for any complex number $\alpha_2$.

Part (iii). Consider a nilpotent matrix in Jordan form with Segre characteristic $(2,1)$: $$J_2(0)\oplus J_1(0)=\begin{bmatrix}
0 & 1 & 0 \\ 0 & 0 & 0\\ 0 & 0 & 0
\end{bmatrix},$$
and let $A$ be a square root such that $A^2=J_2(0)\oplus J_1(0)$. Let $e_1,e_2,e_3$ denote the three unit vectors in $\mathbb{C}^3$. We have \begin{equation}\label{eq3Jchains}
A^2e_1=0;\quad A^2e_2=e_1;\quad  A^2e_3=0.
\end{equation}
Let
\begin{equation}\label{eq3Aej}
Ae_j=\alpha_je_1+\beta_je_2+\gamma_je_3,
\end{equation}
for $j=1,2,3$, where $\alpha_j,\beta_j,\gamma_j$ are complex numbers. Multiply every equation in \eqref{eq3Jchains} by the matrix $A$ and compare with the equations in \eqref{eq3Aej} multiplied by $A^2$, then
\begin{equation}\label{eq3betas}
\beta_1e_1=0;\quad\beta_3e_1=0;\quad\beta_2e_1=Ae_1.
\end{equation}
From the first two equations it follows that $\beta_1=0$, $\beta_3=0$, and from the last equation in \eqref{eq3betas} using \eqref{eq3Aej} with $j=1$ we get
\begin{equation*}
\beta_2e_1=\alpha_1e_1+\gamma_1e_3,
\end{equation*}
which gives $\gamma_1=0$ and $\beta_2=\alpha_1$. Also, we multiply the last equation in \eqref{eq3betas} by $A$ and use \eqref{eq3Jchains} to get $\beta_2=0$ and consequently $\alpha_1=0$. Hence $Ae_1=0$. Next, we take the expressions for $Ae_2$ and $Ae_3$ in \eqref{eq3Aej}, multiply both sides by $A$, and use the equations in \eqref{eq3Aej}, \eqref{eq3Jchains} and $Ae_1=0$ to obtain
\begin{equation*}
\begin{split}
0=A^2e_3=\gamma_3Ae_3=\gamma_3\alpha_3e_1+\gamma_3^2e_3,\\
e_1=A^2e_2=\gamma_2Ae_3=\gamma_2\alpha_3e_1+\gamma_2\gamma_3e_3.
\end{split}
\end{equation*}
From the first equation we conclude that $\gamma_3=0$ and then it follows from the second equation that $\gamma_2\alpha_3=1$. Thus any square root of the matrix $J_2(0)\oplus J_1(0)$ is given by:
\begin{equation}\label{eq321genform}
\begin{bmatrix}
0 & \alpha_2 & \alpha_3 \\
0 & 0 & 0 \\
0 & \gamma_2 & 0
\end{bmatrix},
\end{equation}
where $\alpha_2,\alpha_3,\gamma_2$ are any complex numbers such that $\gamma_2\alpha_3=1$. Then
\begin{equation*}
Ae_2=\alpha_2 e_1+ \gamma_2e_3,\quad A(\alpha_2 e_1+ \gamma_2e_3)=\gamma_2 \alpha_3 e_1=e_1,\quad Ae_1=0,
\end{equation*}
and from $\gamma_2\alpha_3=1$, it follows that \eqref{eq321genform} has Jordan form $J_3(0)$.

Part (iv). Consider a nilpotent matrix in Jordan form with Segre characteristic $(2,2)$:
\begin{equation*}
J_2(0)\oplus J_2(0)=\begin{bmatrix}
0& 1 & 0 & 0 \\
0 & 0 & 0 & 0 \\
0 & 0 & 0 & 1 \\
0 & 0 & 0 & 0
\end{bmatrix},
\end{equation*}
and let $A$ be a square root such that $A^2=J_2(0)\oplus J_2(0)$. Let $e_1,\ldots,e_4$ be the four unit vectors in $\mathbb{C}^4$. Then we have
\begin{equation}\label{eq4Jchains}
A^2e_1=0;\quad A^2e_2=e_1;\quad A^2e_3=0;\quad A^2e_4=e_3.
\end{equation}
Now, let
\begin{equation}\label{eq4Aej}
Ae_j=\alpha_je_1+\beta_je_2+\gamma_je_3+\delta_je_4,
\end{equation}
for $j=1,2,3,4$, and where all coefficients are complex numbers. Now we multiply each equation in \eqref{eq4Aej} by $A^2$ and using \eqref{eq4Jchains} we obtain
\begin{equation}\label{eq4A^3s}
A^3e_j=\beta_je_1+\delta_je_3,\quad j=1,\ldots,4.
\end{equation}
 By taking $j=1$ and $j=3$ in \eqref{eq4A^3s} and using \eqref{eq4Jchains} we obtain $\beta_1=\delta_1=\beta_3=\delta_3=0$. Using \eqref{eq4Jchains} again, the equations in \eqref{eq4A^3s} for $j=2$ and $j=4$ become
\begin{equation}\label{eq4right}
Ae_1=\beta_2e_1+\delta_2e_3\quad\textup{and} \quad Ae_3=\beta_4e_1+\delta_4 e_3.
\end{equation}
From \eqref{eq4Aej} we also have
 \begin{equation}\label{eq4left}
 Ae_1=\alpha_1e_1+\gamma_1e_3,\quad Ae_3=\alpha_3e_1+\gamma_3e_3,
\end{equation}
and when we compare the coefficients of the unit vectors in \eqref{eq4right} and \eqref{eq4left} we obtain the following
\begin{equation*}
\beta_2=\alpha_1,\;\delta_2=\gamma_1,\;\beta_4=\alpha_3,\;\delta_4=\gamma_3.
\end{equation*}
Then the equations in \eqref{eq4Aej} with $j=2$ and $j=4$ become
\begin{equation}\label{eq4newAej}
\begin{split}
Ae_2=\alpha_2e_1+\alpha_1e_2+\gamma_2e_3+\gamma_1e_4,\\
Ae_4=\alpha_4e_1+\alpha_3e_2+\gamma_4e_3+\gamma_3e_4.
\end{split}
\end{equation}
Finally, we multiply the expressions for $Ae_1$ and $Ae_3$ by $A$ and assume that $\gamma_1\neq0$. Use the equations in \eqref{eq4Jchains} and  then
\begin{eqnarray*}
j=1: & 0=\alpha_1Ae_1+\gamma_1Ae_3=(\alpha_1^2+\gamma_1\alpha_3)e_1+(\alpha_1\gamma_1+\gamma_1\gamma_3)e_3 ,\\
j=3: & 0=\alpha_3Ae_1+\gamma_3Ae_3=(\alpha_3\alpha_1+\gamma_3\alpha_3)e_1+(\alpha_3\gamma_1+\gamma_3^2)e_3.
\end{eqnarray*}
From this we obtain equalities which gives
\begin{equation*}
\alpha_1=-\gamma_3,\quad \alpha_3=\frac{-\gamma_3^2}{\gamma_1}.
\end{equation*}
Using these expressions together with \eqref{eq4Jchains}, and doing the same with $Ae_2$ and $Ae_4$ from \eqref{eq4newAej}, we have
\begin{equation*}
\alpha_2=-\gamma_4,\quad\textup{and} \quad\alpha_4=\frac{1}{\gamma_1^2}\left(\gamma_1+\gamma_3^2\gamma_2-2\gamma_4\gamma_3\gamma_1\right).
\end{equation*}
By renaming the variables, we obtain the matrix in \eqref{eq22genform}. Now, note that the following hold
\begin{equation*}
\begin{split}
Ae_2=\alpha_2e_1-\gamma_3e_2+\gamma_2e_3+\gamma_1e_4,\\
A(\alpha_2e_1-\gamma_3e_2+\gamma_2e_3+\gamma_1e_4)=A^2e_2=e_1,\\
Ae_1=\alpha_1 e_1+\gamma_1 e_3,\\
A(\alpha_1 e_1+\gamma_1 e_3)=A^2 e_1=0,
\end{split}
\end{equation*}
and from $\gamma_1\neq 0$ it follows that \eqref{eq22genform} has Jordan form $J_4(0)$.

If $\gamma_1=0$ and $\alpha_3\neq0$, then we obtain $\gamma_2=\frac{1}{\alpha_3}$ and the square root is
\begin{equation*}
\begin{bmatrix}
0 & \alpha_2 & \alpha_3 & \alpha_4 \\
0 & 0 & 0 & \alpha_3 \\
0 & \frac{1}{\alpha_3} & 0 & -\alpha_2 \\
0 & 0 & 0 & 0
\end{bmatrix}. \end{equation*}
Note that $\gamma_3=0$, hence the following hold
\begin{equation*}
\begin{split}
Ae_4=\alpha_4e_1+\alpha_3e_2+\gamma_4e_3,\\
A(\alpha_4e_1+\alpha_3e_2+\gamma_4e_3)=A^2e_4=e_3,\\
Ae_3=\alpha_3 e_1,\\
A(\alpha_3 e_1)=0,
\end{split}
\end{equation*}
and from $\alpha_3\neq 0$ it follows that \eqref{eq22genform2} has Jordan form $J_4(0)$.

If $\gamma_1=\alpha_3=0$, then it follows that $Ae_1=Ae_3=0$ which contradicts the equations in \eqref{eq4newAej} multiplied by $A$.
\end{proof}

We remark that the canonical forms found for the square roots in all cases correspond with those given for the $H$-selfadjoint square roots found in \cite[Theorem~3.1]{MRR}.

\begin{agreement*} Note that the Jordan forms of square roots of all types of pairs in any Segre pairing are uniquely determined, except in the case for $(1,1)$ (the second item of Theorem~\ref{ThmDescrSquareRoot}), where it can be $J_2(0)$ or $J_1(0)\oplus J_1(0)$. But a pair $(1,1)$ in any Segre pairing could just as well be replaced by two pairs $(1,0),(1,0)$, which will also have a square root with Jordan form $J_1(0)\oplus J_1(0)$ (the first item of Theorem~\ref{ThmDescrSquareRoot}). Therefore we agree not to allow $\left[\begin{smallmatrix}
0&0\\0&0
\end{smallmatrix}\right]$  as a square root for a pair $(1,1)$, i.e.\ we exclude $\alpha=0$ in \eqref{eq11genform2} of Theorem~\ref{ThmDescrSquareRoot}. Thus,
under this agreement, we have that the Jordan forms corresponding to all pairs in a Segre pairing are uniquely determined.
\end{agreement*}

We say that a square root is \emph{associated with a Segre pairing} if it is a direct sum of square roots associated with each pair of the Segre pairing (see Theorem~\ref{ThmDescrSquareRoot}). Then all square roots of nilpotent matrices associated with a Segre pairing have the same Jordan form.

We then obtain the following result from Theorem \ref{ThmDescrSquareRoot}:

\begin{corollary}\label{CorOnetoOne}
Let $B_0$ be a nilpotent $H$-nonnegative matrix. Then there is a one-to-one correspondence between the Jordan forms of the square roots $A_0$ of $B_0$ and the Segre pairings of $B_0$.
\end{corollary}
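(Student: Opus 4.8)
The plan is to realize the claimed bijection through the block-by-block dictionary supplied by Theorem~\ref{ThmDescrSquareRoot}. The crucial input is that, under the Agreement, each of the four admissible pair types is matched with exactly one Jordan block size of the square root, namely
\[(1,0)\leftrightarrow J_1(0),\quad (1,1)\leftrightarrow J_2(0),\quad (2,1)\leftrightarrow J_3(0),\quad (2,2)\leftrightarrow J_4(0).\]
Since a Segre pairing is nothing more than a multiset of pairs drawn from the list in \eqref{Porque}, it is completely determined by the number of pairs of each of these four types; I would therefore encode a Segre pairing $\mathcal{S}$ by the quadruple $(c_{10},c_{11},c_{21},c_{22})$ counting its pairs.

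First I would define the forward map $\Phi$ sending a Segre pairing $\mathcal{S}$ to the Jordan form of any square root associated with $\mathcal{S}$. This is well defined: by Theorem~\ref{ThmDescrSquareRoot} together with the Agreement, the square root attached to each individual pair has a uniquely determined Jordan form, so the associated square root, being a direct sum, has Jordan form $J_1(0)^{\oplus c_{10}}\oplus J_2(0)^{\oplus c_{11}}\oplus J_3(0)^{\oplus c_{21}}\oplus J_4(0)^{\oplus c_{22}}$. Injectivity of $\Phi$ is then immediate: a nilpotent Jordan form is determined by its block-size multiplicities, and because the four pair types produce the four distinct sizes $1,2,3,4$, two pairings with the same image must share the same quadruple and hence coincide.

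For surjectivity I would start from an arbitrary square root $A_0$ of $B_0$ and run the dictionary backwards. Since $A_0^2=B_0$ is nilpotent, $A_0$ is nilpotent; and since squaring a block $J_k(0)$ yields $J_{\lceil k/2\rceil}(0)\oplus J_{\lfloor k/2\rfloor}(0)$ (with the convention that $J_0(0)$ is empty), whose largest block has size $\lceil k/2\rceil$, every block of $A_0$ must have size at most $4$, for otherwise $B_0$ would carry a Jordan block of size $\geq 3$ at $0$, contradicting Theorem~\ref{ThmHnonneg}. Squaring each block of the Jordan form of $A_0$ therefore returns one of the pairs in \eqref{Porque}, and collecting these pairs reproduces exactly the block multiset of $B_0$; the result is a genuine Segre pairing $\mathcal{S}$ with $\Phi(\mathcal{S})$ equal to the Jordan form of $A_0$. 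This shows that every Jordan form of a square root lies in the image of $\Phi$, completing the bijection.

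The step I expect to be most delicate is precisely this interplay with the Agreement: without excluding the degenerate square root $\alpha=0$ in \eqref{eq11genform2}, the pair $(1,1)$ could also produce $J_1(0)\oplus J_1(0)$, which already arises from two pairs $(1,0),(1,0)$, destroying injectivity. I would make sure to invoke the Agreement explicitly at the point where I assert that each pair type has a single Jordan form, since this is the only place where distinct pairings might otherwise collapse to the same Jordan form.
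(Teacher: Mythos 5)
Your proof is correct and follows essentially the same route as the paper: the paper obtains the corollary directly from Theorem~\ref{ThmDescrSquareRoot} and the Agreement, which together yield exactly your dictionary $(1,0)\leftrightarrow J_1(0)$, $(1,1)\leftrightarrow J_2(0)$, $(2,1)\leftrightarrow J_3(0)$, $(2,2)\leftrightarrow J_4(0)$ and the resulting injectivity from the distinct block sizes. Your explicit surjectivity argument (blockwise squaring of the root's Jordan form, using that $J_k(0)^2$ has Jordan form $J_{\lceil k/2\rceil}(0)\oplus J_{\lfloor k/2\rfloor}(0)$ together with the bound $k\le 4$ forced by Theorem~\ref{ThmHnonneg}) fills in a step the paper leaves implicit, but it is the natural completion of the same argument rather than a different approach.
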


Thus two different Segre pairings of a matrix $B_0$ give two different Jordan forms of square roots of $B_0$.

\begin{example}
Let $B_0=J_2(0)\oplus J_2(0)\oplus J_1(0)\oplus J_1(0)$ and $H=Q_2\oplus Q_2\oplus \varepsilon_1Q_1\oplus \varepsilon_2Q_1$, where $\varepsilon_1,\varepsilon_2=\pm 1$. Then $B_0$ is $H$-nonnegative,
see Theorem~\ref{ThmHnonneg}. The Segre characteristic of $B_0$ is $(2,2,1,1)$. There exist three different Segre pairings which deliver a square root of $B_0$. Any square root of $B_0$ which is associated with the Segre pairing $(2,2),(1,1)$, will have Jordan form $J_4(0)\oplus J_2(0)$. Any square root which is associated with the Segre pairing $(2,2),(1,0),(1,0)$, will have Jordan form $J_4(0)\oplus J_1(0)\oplus J_1(0)$, and finally any square root which is associated with the Segre pairing $(2,1),(2,1)$,  will have Jordan form $J_3(0)\oplus J_3(0)$.
\end{example}

Combining the singular and nonsingular cases, we can formulate the following result which presents the Jordan form of all square roots. Let $\mathcal{S}$ be any Segre pairing of an $H$-nonnegative matrix $B$. Furthermore, let $\ell_1$ denote the number of pairs $(1,0)$ in $\mathcal{S}$, $\ell_2$ the number of $(1,1)$ pairs in $\mathcal{S}$, $\ell_3$ the number of $(2,1)$ pairs in $\mathcal{S}$, and $\ell_4$ the number of $(2,2)$ pairs in $\mathcal{S}$.

\begin{theorem}\label{ThmJordanFSqR}
Any square root of the $H$-nonnegative matrix $B$ has the Jordan form $A=A_{-}\oplus A_0
\oplus A_+$ with
\begin{equation*}
A_+= \bigoplus_{j=1}^{q}J_1(\delta_j\sqrt{\lambda_j}), \quad A_-=\bigoplus_{j=1}^{r-q}
J_1(i\delta_{q+j}\sqrt{-\lambda_{q+j}})
\end{equation*}
where $\lambda_1,\ldots,\lambda_q>0$ and $\lambda_{q+1},\ldots,\lambda_r<0$, counting multiplicities,
$\delta_j = \pm 1$, and $A_0$ is given by
\begin{equation*}
A_0=\bigoplus_{i=1}^{\ell_1}J_1(0)\oplus\bigoplus_{i=1}^{\ell_2} J_2(0)\oplus\bigoplus_{i=1}^{\ell_3} J_3(0)\oplus\bigoplus_{i=1}^{\ell_4} J_4(0).
\end{equation*}
\end{theorem}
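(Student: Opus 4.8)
The plan is to assemble the full Jordan form by combining three pieces that can be treated independently, thanks to the block decomposition machinery already established. By Theorem~\ref{ThmGenDecomp}, any square root $A$ of $B$ decomposes as $A=A_-\oplus A_0\oplus A_+$, where $A_\pm$ is a square root of $B_\pm$ and $A_0$ is a square root of the nilpotent part $B_0$. Since the Jordan form is invariant under similarity and respects direct sums, it suffices to determine the Jordan form of each summand separately and take their direct sum.

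First I would handle the positive and negative parts. By Theorem~\ref{ThmHnonneg}, both $B_-$ and $B_+$ are diagonalizable, with $B_+$ having eigenvalues $\lambda_1,\ldots,\lambda_q>0$ and $B_-$ having eigenvalues $\lambda_{q+1},\ldots,\lambda_r<0$. A square root of a diagonalizable matrix is obtained by choosing, for each eigenvalue $\lambda_j$, one of the two complex square roots of $\lambda_j$ on the corresponding eigenvector; this yields a diagonalizable square root with eigenvalues $\pm\sqrt{\lambda_j}$. For positive $\lambda_j$ the two choices are $\pm\sqrt{\lambda_j}$, which I write as $\delta_j\sqrt{\lambda_j}$ with $\delta_j=\pm1$; for negative $\lambda_{q+j}$ the two choices are purely imaginary, namely $\pm i\sqrt{-\lambda_{q+j}}$, written as $i\delta_{q+j}\sqrt{-\lambda_{q+j}}$. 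This gives exactly the stated forms for $A_+$ and $A_-$ as direct sums of $1\times1$ Jordan blocks.

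For the nilpotent part $A_0$, I would invoke Theorem~\ref{ThmDescrSquareRoot} together with the Agreement. Since $B_0$ has a square root, it admits a Segre pairing $\mathcal{S}$, and every square root of $B_0$ is (up to similarity) associated with some Segre pairing, being a direct sum of the square roots attached to each individual pair. By the explicit computation in Theorem~\ref{ThmDescrSquareRoot}, a pair $(1,0)$ contributes a block with Jordan form $J_1(0)$, a pair $(1,1)$ contributes $J_2(0)$ (the degenerate case $\alpha=0$ being excluded by the Agreement), a pair $(2,1)$ contributes $J_3(0)$, and a pair $(2,2)$ contributes $J_4(0)$. Counting the pairs of each type with $\ell_1,\ell_2,\ell_3,\ell_4$ and taking the direct sum gives precisely the stated formula for $A_0$.

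The main obstacle — really the only subtle point — is that the Jordan form of $A_0$ depends on which Segre pairing is used, not merely on $B_0$ itself, so the statement must be read as describing the Jordan form relative to a fixed $\mathcal{S}$. This is exactly what Corollary~\ref{CorOnetoOne} guarantees: under the Agreement, each Segre pairing determines a unique Jordan form for $A_0$, and distinct pairings give distinct forms. Thus once $\mathcal{S}$ is fixed the counts $\ell_1,\ldots,\ell_4$ are well-defined and the Jordan form of $A_0$ is unambiguous; combining it with $A_+$ and $A_-$ via the direct-sum structure of Theorem~\ref{ThmGenDecomp} completes the proof.
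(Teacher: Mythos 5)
Your proposal is correct and takes essentially the same route as the paper, whose entire proof is the one-line remark that the result is immediate from Theorem~\ref{ThmDescrSquareRoot} and Corollary~\ref{CorOnetoOne}: like the paper, you split $A$ via Theorem~\ref{ThmGenDecomp}, read off the Jordan form of $A_0$ pair-by-pair under the Agreement, and fix a Segre pairing to make $\ell_1,\ldots,\ell_4$ well defined. The only content you add is the explicit treatment of $A_{\pm}$, which the paper leaves implicit; there, to see that \emph{every} (not just the constructed) square root of $B_{\pm}$ is diagonalizable with eigenvalues $\delta_j\sqrt{\lambda_j}$ resp.\ $i\delta_{q+j}\sqrt{-\lambda_{q+j}}$, one should note that such a square root commutes with $B_{\pm}$, hence preserves each eigenspace, and on the eigenspace for $\lambda$ it satisfies $x^2-\lambda=0$, a polynomial with distinct roots since $\lambda\neq 0$.
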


The proof is trivial from Theorem~\ref{ThmDescrSquareRoot} and Corollary~\ref{CorOnetoOne}.

Note that $B_-$ and $B_+$ have a lot of square roots as can be seen in the following example.

\begin{example}
Let the following matrix be given
$$B=J_1(-3)\oplus J_1(-3)=\begin{bmatrix}
-3&\phantom{-}0\\\phantom{-}0&-3
\end{bmatrix}.$$
Then the following matrices are some of the square roots of $B$:
\begin{equation*}
\begin{bmatrix}
i\sqrt{3}&0\\0&i\sqrt{3}
\end{bmatrix},\; \begin{bmatrix}
-i\sqrt{3}&0\\0&-i\sqrt{3}
\end{bmatrix},\; \begin{bmatrix}
i\sqrt{3}&0\\0&-i\sqrt{3}
\end{bmatrix},\; \begin{bmatrix}
2i&\phantom{-}1\\1&-2i
\end{bmatrix},\; \begin{bmatrix}
2&-7\\1&-2
\end{bmatrix}.
\end{equation*}
\end{example}

\section{$H$-selfadjoint square roots}
Taking Section~\ref{SectionSqR} further, we now investigate $H$-selfadjoint square roots of $H$-nonnegative matrices $B$. We assume that the pair $(B,H)$ is in canonical form. Hence by Theorem~\ref{ThmHnonneg}
\begin{equation*}\label{eq4Bcanon}
B=\bigoplus_{i=1}^qJ_1(\lambda_i)\oplus\bigoplus_{i=1}^{r-q}J_1(\lambda_{q+i})\oplus\bigoplus_{i=1}^sJ_1(0)\oplus\bigoplus_{i=1}^tJ_2(0),
\end{equation*}
\begin{equation*}\label{eq4Hcanon}
H=\bigoplus_{i=1}^qQ_1\oplus\bigoplus_{i=1}^{r-q}(-Q_1)\oplus\bigoplus_{i=1}^s\varepsilon_iQ_1\oplus\bigoplus_{i=1}^tQ_2,
\end{equation*}
where $\lambda_1,\ldots,\lambda_q>0,$ $\lambda_{q+1},\ldots,\lambda_r<0$, and $\varepsilon_i=\pm1$.
For the following we denote by $s_+$  ($s_-$) the number of positive (resp.\ negative) entries in
the vector $(\varepsilon_1,\ldots,\varepsilon_s)$, hence we have
$$
s=s_-+s_+.
$$

\begin{theorem}\label{ThmHsaConditions}
Let $B$ be an $H$-nonnegative matrix in $\mathbb C^{n\times n}$. Then $B$ has an $H$-selfadjoint square root if and only if $B$ satisfies the following:
\begin{enumerate}
\item[{\rm(i)}] $B$ has no negative eigenvalues, that is, $\sigma(B)\subset [0,\infty)$;
\item[{\rm(ii)}] At the eigenvalue zero of $B$, the number of Jordan blocks of size one with positive sign characteristic is not less than the number of Jordan blocks of size two, i.e.,
$$s_+ \geq t.$$
\end{enumerate}
\end{theorem}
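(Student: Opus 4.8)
The plan is to reduce the statement entirely to the general characterization in Theorem~\ref{Thm3.1inMRR}, feeding in the restricted canonical form that $H$-nonnegativity forces through Theorem~\ref{ThmHnonneg}. Recall that an $H$-nonnegative $B$ has real spectrum, that all nonzero eigenvalues contribute only $1\times1$ Jordan blocks, that every block at a negative eigenvalue carries negative sign characteristic, and that every $J_2(0)$ block at zero carries positive sign characteristic. The whole argument then consists of checking what the two bullet conditions of Theorem~\ref{Thm3.1inMRR} become once these sign constraints are imposed.

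First I would treat the negative eigenvalues. Condition (i) of Theorem~\ref{Thm3.1inMRR} demands that the Jordan blocks at each negative eigenvalue occur in pairs of opposite sign characteristic. But by Theorem~\ref{ThmHnonneg} every block at a negative eigenvalue has negative sign characteristic, so a pair consisting of one $+1$ and one $-1$ sign can never be assembled. Hence this condition can be met only vacuously, that is, precisely when $B$ has no negative eigenvalue at all; this yields condition (i) of the present theorem.

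Next I would analyze the zero eigenvalue, which is the heart of the matter. The singular part of $B$ consists of $s_+$ blocks $J_1(0)$ of positive sign, $s_-$ blocks $J_1(0)$ of negative sign, and $t$ blocks $J_2(0)$, all of positive sign. I must decide how these can be distributed among the three types $J^{(1)},J^{(2)},J^{(3)}$ of Theorem~\ref{Thm3.1inMRR}(ii). Since only sizes one and two occur, the admissible equal-size $J^{(1)}$ pairs are $(J_1(0),J_1(0))$ of opposite sign and $(J_2(0),J_2(0))$ of opposite sign; the latter is impossible because both size-two blocks are forced to be positive. A $J^{(2)}$ pair of consecutive sizes must be $(J_2(0),J_1(0))$ of equal sign, which---again because the size-two block is positive---requires its size-one partner to be positive as well. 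Finally $J^{(3)}$ admits only $1\times1$ blocks. Tracking where each $J_2(0)$ block may go shows it is excluded from $J^{(1)}$ and from $J^{(3)}$, hence each of the $t$ size-two blocks must enter a $J^{(2)}$ pair consuming one positive-sign $J_1(0)$. This is exactly the obstruction, and it can be resolved if and only if $s_+\geq t$, giving condition (ii).

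For the converse I would exhibit an explicit decomposition when $s_+\geq t$: pair all $t$ blocks $J_2(0)$ with $t$ of the positive-sign $J_1(0)$ blocks to form $J^{(2)}$; pair the remaining positive-sign and the negative-sign $J_1(0)$ blocks against each other into opposite-sign $J^{(1)}$ pairs; and place any leftover $1\times1$ blocks into $J^{(3)}$. This always succeeds and satisfies Theorem~\ref{Thm3.1inMRR}(ii). I expect the only genuinely delicate step to be the case analysis above, specifically the observation that the uniform positive sign of the $J_2(0)$ blocks simultaneously rules out $J^{(1)}$ and $J^{(3)}$ and thereby pins each of them to $J^{(2)}$; everything else is bookkeeping of sign counts.
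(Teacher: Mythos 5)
Your proposal is correct and follows essentially the same route as the paper: both directions reduce to Theorem~\ref{Thm3.1inMRR} after feeding in the sign-characteristic constraints of Theorem~\ref{ThmHnonneg} (all blocks at negative eigenvalues negative, all $J_2(0)$ blocks positive), forcing every size-two block into a same-sign pair with a positive $J_1(0)$ and hence $s_+\geq t$. The only difference is presentational --- the paper phrases the zero-eigenvalue analysis in the language of Segre pairings and cites Theorem~\ref{Thm3.1inMRR} tersely for sufficiency, whereas you work directly with the $J^{(1)}\oplus J^{(2)}\oplus J^{(3)}$ decomposition and exhibit the sufficiency construction explicitly.
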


\begin{proof}
Let $B$ be an $H$-nonnegative matrix. Note that $B$ is also $H$-selfadjoint by Theorem~\ref{ThmHnonneg}. If conditions (i) and (ii) hold, then it follows from Theorem~\ref{Thm3.1inMRR} that $B$ has an $H$-selfadjoint square root.

Conversely, assume that the $H$-nonnegative matrix $B$ has an $H$-selfadjoint square root $A$. Because of the structure of $B$ as given in Theorem~\ref{ThmHnonneg}, it follows directly from Theorem~\ref{Thm3.1inMRR} that $B$ cannot have negative eigenvalues. Regarding the second assertion, Theorem~\ref{Thm3.1inMRR} also implies that the blocks corresponding to the pairs $(2,1)$ in the Segre pairing have the same sign characteristic. In fact, from Theorem~\ref{ThmHnonneg}, they all have positive sign characteristic. Consider a pair $(2,2)$ in a Segre pairing of $B$ which comes from $J_2(0)\oplus J_2(0)$. By Theorem~\ref{ThmHnonneg} each block has positive sign characteristic, but Theorem~\ref{Thm3.1inMRR} states that for $J_2(0)\oplus J_2(0)$ to have an $H$-selfadjoint square root, the two blocks must have opposite sign characteristic. This means that a pair $(2,2)$ can never be a member of any Segre pairing for an $H$-nonnegative matrix. Consequently, all pairs in a Segre pairing containing a $2$ are of the form $(2,1)$, so there must be in $B$ at least as many Jordan blocks of size one with positive sign characteristic as the number of Jordan blocks of size two.
\end{proof}

\begin{remark}
We mention that the ``only if" part of Theorem~\ref{ThmHsaConditions} can also be obtained by using known results on $H$-plus matrices and $H$-polar decompositions. Let $B$ be an $H$-nonnegative matrix. Assume that $B$ has an $H$-selfadjoint square root $A$. Then we can write $B=A^{[*]}A$. Since $[Bx,x]\geq 0$ for all $x\in\mathbb{C}^n$ we have from \cite{BMRRR2} that $A$ is an $H$-plus matrix since $\mu(A)=\inf_{[u,u]=1}[A^{[*]}Au,u]\geq 0$. The matrix $A$ admits a trivial $H$-polar decomposition: $A=I\cdot A$. Finally, by part (c) of Theorem~3.4 in \cite{BMRRR2}, see errata in \cite{BMRRR2err},  it follows that $B$ has no negative eigenvalues and by part (a) in the same theorem, it follows that $s_+\geq t$.
\end{remark}

 We now obtain an explicit description of all $H$-selfadjoint square roots of a nilpotent $H$-nonnegative matrix $B_0$.

\begin{theorem}\label{ThmDescrHsa}
Let $B_0$ be a nilpotent $H$-nonnegative matrix in Jordan form satisfying the second condition in Theorem~\ref{ThmHsaConditions} and having a Segre pairing $\mathcal{S}$.
Any pair of $\mathcal{S}$ is of the form
\begin{equation}\label{PorqueII}
(2,1),\, (1,1) \mbox{ or } (1,0).
\end{equation}
This corresponds to Jordan blocks of the form
$J_2(0)\oplus J_1(0),\, J_1(0)\oplus J_1(0)$ and $J_1(0)$, respectively, in the matrix $B_0$.
Then the $H$-selfadjoint square roots associated with each possible pair in $\mathcal{S}$ are as follows:
\begin{enumerate}
\item[\rm (i)] The $H$-selfadjoint square root of $J_1(0)=[0]$, which is associated with the pair $(1,0)$, is equal to $[0]$. Its Jordan form is $J_1(0)$ again.
\item[\rm (ii)] The $H$-selfadjoint square root of $J_1(0)\oplus J_1(0)$, which is associated with the pair $(1,1)$, and where the two blocks have opposite sign characteristic, is
\begin{equation*}
\begin{bmatrix}
\alpha & -\bar{\beta} \\ \beta & -\alpha
\end{bmatrix}, \textit{ for any real }\alpha\textit{ and complex }\beta,\;\abs{\beta}=\pm\alpha.
\end{equation*}
The Jordan form of all matrices of this form is $J_2(0)$.
\item[\rm (iii)] The $H$-selfadjoint square root of $J_2(0)\oplus J_1(0)$, which is associated with the pair $(2,1)$, and where the two blocks have both positive sign characteristic, is
\begin{equation*}
\begin{bmatrix}
0&\alpha & \beta \\ 0&0&0 \\ 0&\frac{1}{\beta}&0
\end{bmatrix},\textit{ for real }\alpha\textit{ and complex }\beta,\;\abs{\beta}=1.
\end{equation*}
The Jordan form of all matrices of this form is $J_3(0)$.
\end{enumerate}
\end{theorem}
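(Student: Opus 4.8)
The plan is to leverage the complete classification of (unstructured) square roots from Theorem~\ref{ThmDescrSquareRoot}: every $H$-selfadjoint square root is in particular a square root, so it must already appear in one of the parametrized families there, and the only extra work is to cut those families down by imposing the defining equation $HA=A^*H$. Before doing that, I would first settle the shape of the admissible pairs. A pair $(2,2)$ arises from $J_2(0)\oplus J_2(0)$, and by Theorem~\ref{ThmHnonneg} every Jordan block of size two at the zero eigenvalue of an $H$-nonnegative matrix carries positive sign characteristic; hence both summands are positive. However, Theorem~\ref{Thm3.1inMRR}(ii) forces the equal-sized pairs in $J^{(1)}$ to have \emph{opposite} sign characteristic whenever an $H$-selfadjoint square root exists. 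This contradiction rules out $(2,2)$, leaving exactly the three pairs $(2,1),(1,1),(1,0)$ of \eqref{PorqueII}. The same two theorems pin down the sign characteristic inside each surviving pair: a $(2,1)$ pair sits in $J^{(2)}$, so its two blocks share a sign, which must be positive because the size-two block is positive; a $(1,1)$ pair sits in $J^{(1)}$ and therefore has opposite signs.

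With the pairs and signs fixed, I would treat each case separately, always taking the sub-Gramian induced by the canonical form. Part (i) is immediate: the sole square root of $[0]$ is $[0]$, which is trivially $H$-selfadjoint for $H=\pm1$. For part (ii) the relevant Gramian is $Q_1\oplus(-Q_1)=\operatorname{diag}(1,-1)$, and I would start from the two families \eqref{eq11genform} and \eqref{eq11genform2}. Writing $HA=A^*H$ for $A=\left[\begin{smallmatrix}\alpha&-\alpha^2/\beta\\\beta&-\alpha\end{smallmatrix}\right]$ forces the diagonal entries to be real (so $\alpha\in\mathbb{R}$) and gives $-\alpha^2/\beta=-\bar\beta$, i.e.\ $\alpha^2=\abs{\beta}^2$; rewriting the $(1,2)$ entry as $-\bar\beta$ then yields the displayed form with $\abs{\beta}=\pm\alpha$. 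The second family \eqref{eq11genform2} collapses under $HA=A^*H$ to the zero matrix, which is excluded for a $(1,1)$ pair by the Agreement, so it contributes nothing.

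For part (iii) the Gramian is $Q_2\oplus Q_1$, and I would substitute the parametrization \eqref{eq21genform} into $HA=A^*H$. Comparing entries forces $\alpha$ to be real and $\beta=\overline{1/\beta}$, that is $\abs{\beta}=1$, which is precisely the claimed family. In every case the Jordan form is inherited directly from Theorem~\ref{ThmDescrSquareRoot} (namely $J_2(0)$ for the nonzero members of (ii) and $J_3(0)$ for (iii)), so no separate nilpotency computation is needed. The converse direction — that each displayed matrix is genuinely $H$-selfadjoint and squares to the stated block — is then a one-line verification, since $HA=A^*H$ was the equation just solved and $A^2$ equals the corresponding Jordan block by Theorem~\ref{ThmDescrSquareRoot}.

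The computations themselves are short; the step I would treat as the crux is the bookkeeping of sign characteristics. One must check that the per-pair reduction is legitimate, i.e.\ that $H$ is block diagonal compatibly with the Segre pairing so that $H$-selfadjointness of the direct sum is equivalent to $H$-selfadjointness of each summand, and that the sign pattern dictated by Theorem~\ref{Thm3.1inMRR} (opposite for $(1,1)$, both positive for $(2,1)$) is exactly the one making the solved parameter families nonempty. Condition (ii) of Theorem~\ref{ThmHsaConditions}, $s_+\ge t$, is what guarantees such a compatible pairing exists in the first place.
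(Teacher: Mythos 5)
Your proposal is correct and follows essentially the same route as the paper: it restricts the unstructured square-root families of Theorem~\ref{ThmDescrSquareRoot} by the equation $HA=A^*H$, rules out $(2,2)$ pairs via the sign-characteristic clash between Theorem~\ref{ThmHnonneg} and Theorem~\ref{Thm3.1inMRR}, and inherits the Jordan forms from Theorem~\ref{ThmDescrSquareRoot}. The only cosmetic difference is that you fix the sign pattern (e.g.\ $H=Q_1\oplus(-Q_1)$ for a $(1,1)$ pair) from Theorem~\ref{Thm3.1inMRR} before solving, whereas the paper keeps $H=\varepsilon_1Q_1\oplus\varepsilon_2Q_1$ general and lets the equation $HA=A^*H$ itself force $\varepsilon_1\neq\varepsilon_2$ — both are valid.
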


\begin{proof}
Relation \eqref{PorqueII} follows from  \eqref{Porque} and the fact that
a pair $(2,2)$ can never be a member of any Segre pairing for an $H$-nonnegative matrix,
with an $H$-selfadjoint square root,
which was shown in the proof of Theorem~\ref{ThmHsaConditions}.

Assertion (i) is trivial. For (ii) we note that equations \eqref{eq11genform} and \eqref{eq11genform2} give the possible forms of square roots in this case. Firstly, for the matrix $A$ as in \eqref{eq11genform}, since $J_1(0)\oplus J_1(0)$ is $H$-selfadjoint where $H=\varepsilon_1 Q_1\oplus\varepsilon_2 Q_1$, for some $\varepsilon_i=\pm1$, we see that
\begin{equation*}
HA=\begin{bmatrix}
\varepsilon_1&0\\0 &\varepsilon_2
\end{bmatrix}\begin{bmatrix}
\alpha & \frac{-\alpha^2}{\beta} \\ \beta & -\alpha
\end{bmatrix}=\begin{bmatrix}
\varepsilon_1\alpha & \varepsilon_1\frac{-\alpha^2}{\beta} \\ \varepsilon_2\beta & -\varepsilon_2\alpha
\end{bmatrix}
\end{equation*}
is equal to $A^*H=(HA)^*$ if and only if $\alpha\in\mathbb{R}$, $\varepsilon_1\neq\varepsilon_2$ and $\abs{\beta}^2=\alpha^2$. This gives the $H$-selfadjoint square root associated with a pair $(1,1)$, as
\begin{equation*}\label{eq11Hsagenform}
\begin{bmatrix}
\alpha & -\bar{\beta} \\ \beta & -\alpha
\end{bmatrix},\quad \textup{ where } \alpha\in\mathbb{R},\;\abs{\beta}=\pm\alpha.
\end{equation*}
As for $A$ as in \eqref{eq11genform2}, the matrix $\left[\begin{smallmatrix}
0&\alpha\\0&0
\end{smallmatrix}\right]$ is $H$-selfadjoint if and only if $\alpha=0$, but we agreed to exclude this case for a pair $(1,1)$, since it is covered by two pairs $(1,0),(1,0)$.

For (iii) we note that $J_2(0)\oplus J_1(0)$ is $H$-selfadjoint where $H=Q_2\oplus \varepsilon Q_1$, but from the proof of Theorem~\ref{ThmHsaConditions} we know that $\varepsilon=1$. Let $A$ be the matrix in \eqref{eq21genform}, which is associated with a pair $(2,1)$. Then we have
\begin{equation*}
HA=\begin{bmatrix}
0 & 1 &0\\1&0&0\\0&0&1
\end{bmatrix}
\begin{bmatrix}
0&\alpha & \beta \\ 0&0&0 \\ 0&\frac{1}{\beta}&0
\end{bmatrix}=\begin{bmatrix}
0&0 & 0\\ 0&\alpha &\beta \\ 0&\frac{1}{\beta}&0
\end{bmatrix}.
\end{equation*}
This matrix is equal to $A^*H=(HA)^*$ if and only if $\alpha\in\mathbb{R}$ and $\abs{\beta}^2=1$. Therefore the $H$-selfadjoint square root associated with a pair $(2,1)$ is
\begin{equation*}\label{eq21Hsagenform}
\begin{bmatrix}
0&\alpha & \beta \\ 0&0&0 \\ 0&\frac{1}{\beta}&0
\end{bmatrix},\quad \alpha\textup{ real},\;\abs{\beta}=1.\qedhere
\end{equation*}
\end{proof}

The fact in (ii) of Theorem~\ref{ThmDescrHsa}, that the two blocks in $J_1(0)\oplus J_1(0)$ should have opposite sign characteristic for the existence of an $H$-selfadjoint square root, also follows from Theorem~\ref{Thm3.1inMRR}. Of course, when the blocks $J_1(0)$ in $B$ do not have the correct sign characteristic to be paired as $(1,1)$, they can be paired as two pairs $(1,0),(1,0)$.

For the following result where the canonical form of the $H$-selfadjoint square roots are given, let $\mathcal{S}$ be any Segre pairing of an $H$-nonnegative matrix $B$. Also, let $\ell_1$ denote the number of pairs $(1,0)$ in $\mathcal{S}$, $\ell_2$ the number of $(1,1)$ pairs in $\mathcal{S}$, and $\ell_3$ the number of $(2,1)$ pairs in $\mathcal{S}$.

\begin{theorem}\label{EternalLoveAngelicaZapata}
Let $B$ be an $H$-nonnegative matrix satisfying the conditions in Theorem~\ref{ThmHsaConditions}. Assume that $(B,H)$ is in canonical form. Then the canonical form of the pair $(A,H)$, where $A$ is an $H$-selfadjoint square root of $B$, is given by
\begin{equation*}
S^{-1}AS=\bigoplus_{i=1}^{q_1}J_1(\sqrt{\lambda_i})\oplus\bigoplus_{i=1}^{q_2}J_1(-\sqrt{\lambda_i})
\oplus\bigoplus_{i=1}^{\ell_1}J_1(0)\oplus\bigoplus_{i=1}^{\ell_2}J_2(0)\oplus\bigoplus_{i=1}^{\ell_3}J_3(0),
\end{equation*}
\begin{equation*}
S^*HS=\bigoplus_{i=1}^{q_1}Q_1\oplus \bigoplus_{i=1}^{q_2}Q_1\oplus\bigoplus_{i=1}^{\ell_1}\zeta_{i} Q_1\oplus\bigoplus_{i=1}^{\ell_2}\eta_iQ_2\oplus\bigoplus_{i=1}^{\ell_3}Q_3,
\end{equation*}
for some invertible matrix $S$, where $q_1+q_2=q$,  $\lambda_1,\ldots,\lambda_q>0$, the number $\zeta_i$ is equal to the sign characteristic of the corresponding block $J_1(0)$ in $B$, and $\eta_i=\pm1$.
\end{theorem}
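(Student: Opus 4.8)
The plan is to build the claimed canonical form of $(A,H)$ block-by-block along the Segre pairing $\mathcal{S}$, using the explicit $H$-selfadjoint square roots from Theorem~\ref{ThmDescrHsa} together with the canonical-form uniqueness of Theorem~\ref{canonform}. Since $B$ satisfies the conditions of Theorem~\ref{ThmHsaConditions}, every pair of $\mathcal{S}$ is of type $(1,0)$, $(1,1)$ or $(2,1)$, and these contribute $\ell_1$, $\ell_2$ and $\ell_3$ copies respectively. The square root $A$ is $H$-selfadjoint, so $(A,H)$ has a canonical form of the type in Theorem~\ref{canonform}; the whole task is to identify its Jordan blocks, the eigenvalues, and the attached signs.

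First I would treat the regular part. By Theorem~\ref{ThmGenDecomp} and the block decomposition, $A=A_-\oplus A_0\oplus A_+$, but condition (i) of Theorem~\ref{ThmHsaConditions} forces $\sigma(B)\subset[0,\infty)$, so there is no $A_-$ part. On the positive part $B_+=\bigoplus_{i=1}^{q}J_1(\lambda_i)$ with all $Q_1$-signs $+1$, an $H$-selfadjoint square root must have real eigenvalues $\pm\sqrt{\lambda_i}$; writing $q_1$ of them as $+\sqrt{\lambda_i}$ and $q_2=q-q_1$ as $-\sqrt{\lambda_i}$ gives exactly the regular blocks $J_1(\pm\sqrt{\lambda_i})$ in the asserted form. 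The point to verify here is that each such real eigenvalue block carries sign $+1$, i.e.\ the $Q_1$ in $S^*HS$; this follows because squaring preserves the one-dimensional $H$-selfadjoint structure on each eigenspace and $B_+$ has positive signs throughout.

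Next I would handle the nilpotent part $A_0$ pair by pair. For a $(1,0)$ pair, Theorem~\ref{ThmDescrHsa}(i) gives the $1\times 1$ zero root, which contributes a block $J_1(0)$; its sign $\zeta_i$ is inherited directly from the sign characteristic of the corresponding $J_1(0)$ in $B$, giving the $\ell_1$ blocks $\zeta_i Q_1$. For a $(1,1)$ pair, Theorem~\ref{ThmDescrHsa}(ii) yields a root with Jordan form $J_2(0)$; because such a root is $H$-selfadjoint with $H$ restricted to $\varepsilon_1 Q_1\oplus\varepsilon_2 Q_1$ where $\varepsilon_1\neq\varepsilon_2$, the resulting $2\times 2$ block has a single sign $\eta_i=\pm 1$ attached to its $Q_2$, which I would compute directly from the reducing transformation. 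For a $(2,1)$ pair, Theorem~\ref{ThmDescrHsa}(iii) gives a root with Jordan form $J_3(0)$ on the space carrying $H=Q_2\oplus Q_1$; since a single Jordan block of odd size three at a real eigenvalue carries a uniquely determined sign, I would pin down that $Q_3$ has the stated (fixed) sign by an explicit congruence computation. Summing these contributions over the pairs reproduces the claimed $S^{-1}AS$ and $S^*HS$.

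The main obstacle I expect is \emph{bookkeeping the sign characteristics correctly}, especially for the $(2,1)$ blocks: I must exhibit an explicit invertible $S_0$ on each $3$-dimensional summand that simultaneously reduces the $H$-selfadjoint root to $J_3(0)$ and reduces $Q_2\oplus Q_1$ to $\pm Q_3$, and confirm the sign is the one asserted (and likewise determine $\eta_i$ for $(1,1)$). The routine verification that each explicit matrix squares to the correct nilpotent block is already done in Theorem~\ref{ThmDescrHsa}, so the remaining work is purely the congruence/sign computation on each block, after which uniqueness of the canonical form in Theorem~\ref{canonform} (up to permutation of orthogonal components) assembles the global form and finishes the proof.
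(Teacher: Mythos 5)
Your computations are right and your skeleton (real spectrum, Jordan structure from Theorem~\ref{ThmDescrHsa}, assembly via uniqueness in Theorem~\ref{canonform}) matches the paper's, but you establish the sign characteristics by a genuinely different route. The paper's proof is essentially a citation: it notes that $\sigma(B)\subset[0,\infty)$ forces $\sigma(A)\subset\mathbb{R}$, gets the form of $S^{-1}AS$ from Theorem~\ref{ThmDescrHsa}, and then reads off \emph{all} the signs from parts (c)--(f) of Theorem~3.1 in \cite{MRR} --- precisely the ``moreover'' clauses that were stripped out when that result was restated as Theorem~\ref{Thm3.1inMRR}. You instead compute the signs directly: on the positive spectral part $H$ restricts to the identity, so positive definiteness forces the signs of the $J_1(\pm\sqrt{\lambda_i})$ blocks to be $+1$; a $(1,0)$ pair trivially keeps its sign $\zeta_i$; for a $(2,1)$ pair the restricted Gramian $Q_2\oplus Q_1$ has signature $(2,1)$, and since congruence preserves signature while $-Q_3$ has signature $(1,2)$, the sign must be $+1$ (this signature count is the clean way to do your proposed congruence computation; note that your side remark that an odd-size block ``carries a uniquely determined sign'' is only true relative to the given $H$ --- in general both $\pm Q_3$ occur as inequivalent canonical forms); and for a $(1,1)$ pair nothing needs computing, since the statement only asserts $\eta_i=\pm1$ and uniqueness of the canonical form already supplies some sign (both values do occur, depending on the sign of $\alpha$ in the root of Theorem~\ref{ThmDescrHsa}(ii)). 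What your route buys is a self-contained proof that never invokes the uncited portions of \cite{MRR}.

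One caveat: your per-pair argument literally covers square roots that are direct sums, along a Segre pairing, of the block roots of Theorem~\ref{ThmDescrHsa}. An arbitrary $H$-selfadjoint square root $A$ need not be block-diagonal in the given basis; its Jordan form still corresponds to a Segre pairing (Corollary~\ref{CorOnetoOne} together with the exclusion of $(2,2)$ pairs from the proof of Theorem~\ref{ThmHsaConditions}), but the individual signs attached to its $J_1(0)$ and $J_3(0)$ blocks are then not pinned down by per-pair or aggregate signature arguments: with two $(2,1)$ pairs and two $(1,0)$ blocks of opposite signs in $B$, the assignment $Q_3\oplus(-Q_3)\oplus Q_1\oplus Q_1$ has the same total signature as the asserted $Q_3\oplus Q_3\oplus Q_1\oplus(-Q_1)$, so signature counting alone cannot exclude it. Ruling this out for general roots is exactly what the per-block sign relations of \cite[Theorem 3.1(c)--(f)]{MRR} provide. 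Since the paper's own proof also leans on the per-pair Theorem~\ref{ThmDescrHsa} for the form of $S^{-1}AS$, your argument matches the paper's level of rigor on the Jordan structure and falls short only on the signs of non-block-diagonal roots; to close that gap you would either cite \cite{MRR} as the paper does, or prove that every $H$-selfadjoint square root of $B$ is $H$-unitarily similar to a per-pair direct sum.
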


\begin{proof}
As $\sigma(B)\subset[0,\infty)$, see Theorem~\ref{ThmHsaConditions}, the spectrum of $A$ is real.
This and Theorem~\ref{ThmDescrHsa} show the form of $S^{-1}AS$. The sign characteristics
related to the blocks $J_1(\sqrt{\lambda_i})$ and $J_1(-\sqrt{\lambda_i})$ follow from
part (c) in \cite[Theorem~3.1]{MRR}. The sign characteristics
related to the blocks $J_1(0)$, $J_2(0)$ and $J_3(0)$ follow from
part (d), (f), and (e), respectively, in \cite[Theorem~3.1]{MRR}.
\end{proof}

\section{$H$-nonnegative square roots}
In this section we study $H$-nonnegative square roots of $H$-nonnegative matrices.
\begin{theorem}\label{ThmHnonConditions}
Let $B$ be an $H$-nonnegative matrix in $\mathbb C ^{n\times n}$. Then $B$ has an $H$-nonnegative square root if and only if the following properties hold:
\begin{enumerate}
\item[\rm (i)] $B$ has no negative eigenvalues, that is, $\sigma(B)\subset [0,\infty)$;
\item[\rm (ii)] $B$ has no Jordan blocks of size two at the eigenvalue zero, that is, $t=0$ in the canonical form of $(B,H)$ as given in Theorem~\ref{ThmHnonneg}.
\end{enumerate}
\end{theorem}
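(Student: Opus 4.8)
The plan is to prove both implications directly from the canonical form in Theorem~\ref{ThmHnonneg}, exploiting the structural fact that an $H$-nonnegative matrix has Jordan blocks of size at most two, with size-two blocks occurring only at the eigenvalue zero.

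For the necessity of (i) and (ii), I would suppose that $A$ is an $H$-nonnegative square root of $B$, so $A^2=B$. Since $A$ is itself $H$-nonnegative, I first apply Theorem~\ref{ThmHnonneg} to $A$ to describe its Jordan structure: every block of $A$ has size one or two, the size-two blocks sit at the eigenvalue $0$, and the spectrum of $A$ is real. I then square the blocks individually. A size-one block $J_1(\mu)$ squares to $J_1(\mu^2)$, while a size-two nilpotent block $J_2(0)$ squares to the $2\times2$ zero matrix, i.e.\ to $J_1(0)\oplus J_1(0)$. Consequently every Jordan block of $B=A^2$ has size one, so $B$ is diagonalizable; in particular $B$ has no $J_2(0)$ block, which is exactly $t=0$, giving (ii). Moreover, each eigenvalue of $B$ is the square of a real eigenvalue of $A$ and hence lies in $[0,\infty)$, giving (i).

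For the sufficiency, I assume (i) and (ii). By Theorem~\ref{ThmHnonneg}, together with $r=q$ (no negative eigenvalues) and $t=0$ (no size-two blocks at zero), the canonical form reduces to $B=\bigoplus_{i=1}^q J_1(\lambda_i)\oplus\bigoplus_{i=1}^s J_1(0)$ with $H=\bigoplus_{i=1}^q Q_1\oplus\bigoplus_{i=1}^s\varepsilon_i Q_1$ and $\lambda_i>0$; thus $B$ is the diagonal matrix $\mathrm{diag}(\lambda_1,\ldots,\lambda_q,0,\ldots,0)$. I then take the obvious candidate
\begin{equation*}
A=\mathrm{diag}(\sqrt{\lambda_1},\ldots,\sqrt{\lambda_q},0,\ldots,0),
\end{equation*}
which clearly satisfies $A^2=B$. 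It remains to verify that $A$ is $H$-nonnegative: $A$ is real and diagonal, so $HA$ is again real diagonal and hence Hermitian, so that $A$ is $H$-selfadjoint; and $(HAx,x)=\sum_{i=1}^q\sqrt{\lambda_i}\,\abs{x_i}^2\ge 0$ for all $x$, so $A$ is $H$-nonnegative. Equivalently, $A$ meets the conditions of Theorem~\ref{ThmHnonneg}, since its nonzero eigenvalues $\sqrt{\lambda_i}$ are positive with positive sign characteristic and all of its zero blocks have size one.

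I do not expect a genuine obstacle here: the entire argument rests on the single observation that squaring a nilpotent $J_2(0)$ block collapses it to two $J_1(0)$ blocks, so an $H$-nonnegative square root can never manufacture a size-two block in $B$. The only point requiring mild care is the bookkeeping of sign characteristics in the sufficiency part, but since the constructed $A$ is diagonal with nonnegative entries this is immediate. This also clarifies why the $H$-nonnegative condition $t=0$ is strictly stronger than the $H$-selfadjoint condition $s_+\ge t$ of Theorem~\ref{ThmHsaConditions}: producing a $J_2(0)$ block in $B$ would require a $J_3(0)$ block (arising from a $(2,1)$ pair) in the square root, which is forbidden for an $H$-nonnegative matrix.
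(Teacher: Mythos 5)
Your proof is correct, but it follows a genuinely different and more elementary route than the paper's. For necessity, the paper does not square the Jordan form of $A$: it first invokes Theorem~\ref{ThmHsaConditions} (whose proof rests on Theorem~\ref{Thm3.1inMRR}) to get $\sigma(B)\subset[0,\infty)$ and $s_+\geq t$, and then argues through the Segre-pairing machinery, noting that by Theorem~\ref{ThmHnonneg} the root $A$ has no $J_3(0)$ or $J_4(0)$ blocks, which by Theorem~\ref{ThmDescrSquareRoot} are exactly what $(2,1)$ and $(2,2)$ pairs would produce, so $B$ has no $J_2(0)$ blocks. Your single observation that $J_1(\mu)^2=J_1(\mu^2)$ and $J_2(0)^2=J_1(0)\oplus J_1(0)$, applied to the Jordan form of the $H$-nonnegative root $A$, yields (i) and (ii) in one stroke using only Theorem~\ref{ThmHnonneg}. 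For sufficiency, the paper again routes through Theorem~\ref{ThmHsaConditions} to obtain an $H$-selfadjoint root and then checks, via parts (c) and (f) of \cite[Theorem~3.1]{MRR}, that its sign characteristic matches Theorem~\ref{ThmHnonneg}(iii); you instead exhibit the explicit diagonal root $\mathrm{diag}(\sqrt{\lambda_1},\ldots,\sqrt{\lambda_q},0,\ldots,0)$ and verify $H$-nonnegativity directly, which is shorter and self-contained. What the paper's heavier route buys is material that is reused afterwards: the sign-characteristic bookkeeping feeds directly into Theorems~\ref{ThmDescrHnon} and~\ref{ThmJordanFHnonSq}, which classify \emph{all} $H$-nonnegative square roots rather than producing one. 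Two minor points of care in your write-up, neither a real gap: in the sufficiency direction you tacitly assume $(B,H)$ is in canonical form, so you should note that conjugation by $S$ carries $S^*HS$-nonnegative roots of $S^{-1}BS$ to $H$-nonnegative roots of $B$; and in the necessity direction it is worth stating explicitly that $B=A^2$ is \emph{similar} to the square of the Jordan form of $A$, which is why squaring blockwise determines the Jordan structure of $B$.
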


\begin{proof}
Suppose that (i) and (ii) hold. Then by Theorem~\ref{ThmHsaConditions} (also, Theorem~\ref{Thm3.1inMRR}) $B$ has an $H$-selfadjoint square root $A$. To prove that $A$ is $H$-nonnegative we need to show that $A$ has a real spectrum and that the canonical form of $(A,H)$ agrees with that given in Theorem~\ref{ThmHnonneg}. That $A$ has a real spectrum is trivial from $\sigma(B)\subset[0,\infty)$. From (ii) the only pairs that can occur in any Segre pairing of $B$ are  $(1,0)$ and $(1,1)$. Then Theorem~\ref{ThmDescrSquareRoot} implies that the Jordan form of $A_0$ can only contain Jordan blocks of the form $J_1(0)$ and $J_2(0)$. From part (c) in \cite[Theorem~3.1]{MRR} the square roots of the Jordan blocks corresponding to the positive eigenvalues all have positive sign characteristic  and from part (f) each square root of the pairs $J_1(0)\oplus J_1(0)$ (which is associated with the pair $(1,1)$ and has Jordan form $J_2(0)$) also has positive sign characteristic. Therefore the canonical form of $(A,H)$ has the desired  form as given in (iii) of Theorem~\ref{ThmHnonneg}.

Conversely, suppose $B$ has an $H$-nonnegative square root $A$. By Theorem~\ref{ThmHsaConditions} $\sigma(B)\subset[0,\infty)$ and $s_+\geq t$. But by Theorem~\ref{ThmHnonneg} the Jordan form of $A$ contains no blocks $J_3(0)$ and $J_4(0)$, which by Theorem~\ref{ThmDescrSquareRoot} would originate from pairs $(2,1)$ and $(2,2)$ in a Segre pairing of $B$. Therefore $B$ can have no Jordan blocks of size two at the eigenvalue zero.
\end{proof}

\begin{remark}
The ``only if" direction in Theorem~\ref{ThmHnonConditions} may also be deduced from earlier results. Indeed, suppose $B$ is $H$-nonnegative, and has an $H$-nonnegative square root $A$. Then $B=A^2=A^{[*]}A$. Moreover, $A$ admits a trivial {semidefinite} $H$-polar decomposition: $A=I\cdot A$ in terms of Section 5 in \cite{BMRRR3}. Hence it follows from Theorem~5.3 in \cite{BMRRR3} that $B$ has all its eigenvalues in $[0,\infty)$ and is diagonalizable, which implies in particular that the Jordan blocks at zero are all of size one. In fact, from Theorem 5.3 in \cite{BMRRR3} one extra point may be concluded, namely that $\dim\textup{Ker} A\geq \max(s_+,s_-)$.
\end{remark}

By Theorem~\ref{ThmHnonConditions} there exists an $H$-nonnegative square root for an $H$-nonnegative matrix $B$ if and only if the canonical form of the pair $(B,H)$ is given by
\begin{align}\label{eq5HnonBH}
S^{-1}BS&=\bigoplus_{i=1}^qJ_1(\lambda_i)\oplus\bigoplus_{i=1}^sJ_1(0),\\
S^*HS&=\bigoplus_{i=1}^qQ_1\oplus\bigoplus_{i=1}^s\varepsilon_iQ_1,\label{eq5HnonH}
\end{align}
for some invertible matrix $S$, where $\lambda_i>0$ for all $i=1,\ldots,q$ and $\varepsilon_i=\pm 1$ for all $i=1,\ldots,s$. Note that the Segre characteristic of $B$ is equal to $(1,\ldots,1)$. This implies
the following theorem.

\begin{theorem}\label{ThmDescrHnon}
Let $B_0$ be a nilpotent $H$-nonnegative matrix in Jordan form satisfying the conditions in Theorem~\ref{ThmHnonConditions} and having a Segre pairing $\mathcal{S}$.
Any pair of $\mathcal{S}$ is of the form
\begin{equation*}
 (1,1) \mbox{ or } (1,0).
\end{equation*}
This corresponds to Jordan blocks of the form
$J_1(0)\oplus J_1(0)$ and $J_1(0)$, respectively, in the matrix $B_0$.
Then the $H$-nonnegative square roots associated with each possible pair in $\mathcal{S}$ are given by the first two items in Theorem~\ref{ThmDescrHsa}.
\end{theorem}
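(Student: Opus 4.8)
The plan is to follow the same reduction as in Theorem~\ref{ThmDescrHsa}, using that condition~(ii) of Theorem~\ref{ThmHnonConditions} forces $t=0$. First I would note that, since $B_0$ is nilpotent and $H$-nonnegative with no Jordan block of size two at $0$, its Segre characteristic consists entirely of ones. By the definition of a Segre pairing---each entry is used exactly once, paired either with another entry or with a single $0$, with absolute difference at most one and with $(0,0)$ forbidden---the only possible pairs are $(1,1)$ and $(1,0)$. The identification of these pairs with the Jordan blocks $J_1(0)\oplus J_1(0)$ and $J_1(0)$ is then immediate, which settles the first two assertions.

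For the description of the roots I would invoke the class inclusion. By Theorem~\ref{ThmHnonneg} every $H$-nonnegative matrix is $H$-selfadjoint, so any $H$-nonnegative square root of $B_0$ is in particular an $H$-selfadjoint square root. Since only the pairs $(1,0)$ and $(1,1)$ occur, Theorem~\ref{ThmDescrHsa}(i),(ii) already exhausts the $H$-selfadjoint square roots attached to these pairs; hence every $H$-nonnegative square root is of one of those two forms. This gives the containment direction with no further work.

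The real content is the converse: deciding which matrices in Theorem~\ref{ThmDescrHsa}(i),(ii) are genuinely $H$-nonnegative. For a pair $(1,0)$ the root is $[0]$, trivially $H$-nonnegative. For a pair $(1,1)$, with $H=\varepsilon_1Q_1\oplus\varepsilon_2Q_1$, $\varepsilon_1=-\varepsilon_2$, and $A=\left[\begin{smallmatrix}\alpha&-\bar\beta\\\beta&-\alpha\end{smallmatrix}\right]$, I would form the Hermitian matrix $HA$ and recall that $A$ is $H$-nonnegative exactly when $HA$ is positive semidefinite. Using $\abs{\beta}=\abs{\alpha}$ (equivalently $A^2=0$) one finds $\det(HA)=0$ and $\operatorname{tr}(HA)=2\varepsilon_1\alpha$, so the eigenvalues of $HA$ are $0$ and $2\varepsilon_1\alpha$. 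Thus $HA$ is positive semidefinite precisely when $\varepsilon_1\alpha\ge0$, and since $\alpha\ne0$ by the Agreement this means $\operatorname{sign}(\alpha)=\varepsilon_1$; equivalently, the Jordan block $J_2(0)$ of $A$ has positive sign characteristic, exactly the condition forced by Theorem~\ref{ThmHnonneg} and already isolated in the existence proof of Theorem~\ref{ThmHnonConditions}. I expect this positivity test---and in particular making explicit the sign restriction on $\alpha$ that it imposes---to be the only genuine obstacle; everything else is a transcription of the $H$-selfadjoint case under the extra hypothesis $t=0$.
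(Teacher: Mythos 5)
Your proposal is correct, and it actually does more than the paper. The paper gives essentially no proof of this theorem: after noting (via Theorem~\ref{ThmHnonConditions}) that the Segre characteristic of $B_0$ is $(1,\ldots,1)$, it simply states that this ``implies the following theorem,'' tacitly treating the $H$-selfadjoint roots of Theorem~\ref{ThmDescrHsa}(i),(ii) as automatically $H$-nonnegative; the sign-characteristic justification is delegated to the proof of Theorem~\ref{ThmHnonConditions}, where it is even overstated (``each square root of the pairs $J_1(0)\oplus J_1(0)$ \ldots\ also has positive sign characteristic''). Your first two paragraphs follow that same reduction, but your third paragraph --- the positive semidefiniteness test on $HA$ --- is genuinely new, and it is not an empty check: it shows that the statement, read literally as an equality of families, is too generous. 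Concretely, take $H=Q_1\oplus(-Q_1)$, $\alpha=-1$, $\beta=1$: then $A=\left[\begin{smallmatrix}-1&-1\\1&1\end{smallmatrix}\right]$ belongs to the family of Theorem~\ref{ThmDescrHsa}(ii) (it is an $H$-selfadjoint square root of $J_1(0)\oplus J_1(0)$ with Jordan form $J_2(0)$), yet $HA=\left[\begin{smallmatrix}-1&-1\\-1&-1\end{smallmatrix}\right]$ is negative semidefinite, so $A$ is not $H$-nonnegative; equivalently, its $J_2(0)$ block has negative sign characteristic, which Theorem~\ref{ThmHnonneg} forbids for $H$-nonnegative matrices. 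Your extra condition $\mathrm{sign}(\alpha)=\varepsilon_1$ (i.e.\ $\varepsilon_1\alpha>0$) is exactly the missing restriction, and it is consistent with the rest of the paper: Theorem~\ref{EternalLoveAngelicaZapata} allows $\eta_i=\pm1$ for $H$-selfadjoint roots, while Theorem~\ref{ThmJordanFHnonSq} forces sign $+1$ for $H$-nonnegative ones. So what your route buys is a corrected, sharper statement: for a $(1,1)$ pair the $H$-nonnegative square roots are precisely the matrices of Theorem~\ref{ThmDescrHsa}(ii) satisfying $\varepsilon_1\alpha>0$, a proper subfamily of the $H$-selfadjoint square roots.
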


We now also give the canonical form of the $H$-nonnegative square roots. Let $\mathcal{S}$ be any Segre pairing of an $H$-nonnegative matrix $B$. Once again, let $\ell_1$ denote the number of pairs $(1,0)$ in $\mathcal{S}$ and $\ell_2$ the number of pairs $(1,1)$ in $\mathcal{S}$.

\begin{theorem}\label{ThmJordanFHnonSq}
Let $B$ be an $H$-nonnegative matrix satisfying the conditions in Theorem~\ref{ThmHnonConditions}. Assume that $(B,H)$ is in canonical form, i.e.  $S=I$ in \eqref{eq5HnonBH} and \eqref{eq5HnonH}. Then the canonical form of the pair $(A,H)$, where $A$ is an $H$-nonnegative square root of $B$, is given by:
\begin{equation*}
P^{-1}AP=\bigoplus_{i=1}^q J_1(\sqrt{\lambda_i})\oplus\bigoplus_{i=1}^{\ell_1}J_1(0)\oplus\bigoplus_{i=1}^{\ell_2}J_2(0),
\end{equation*}
\begin{equation*}
P^*HP=\bigoplus_{i=1}^qQ_1\oplus\bigoplus_{i=1}^{\ell_1}\zeta_{i} Q_1\oplus\bigoplus_{i=1}^{\ell_2}Q_2,
\end{equation*}
for some invertible matrix $P$, where $\zeta_i$ is equal to the sign characteristic of the corresponding block $J_1(0)$ in $B$.
\end{theorem}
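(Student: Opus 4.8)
The plan is to obtain the statement as a synthesis of the results already established, in close analogy with the proof of Theorem~\ref{EternalLoveAngelicaZapata}. Since $(B,H)$ is in the canonical form \eqref{eq5HnonBH}--\eqref{eq5HnonH} with $S=I$ and $B$ has no negative eigenvalues (Theorem~\ref{ThmHnonConditions}), Theorem~\ref{ThmGenDecomp} lets me write any $H$-nonnegative square root $A$ as $A=A_0\oplus A_+$, where $A_+$ is a square root of $B_+=\bigoplus_{i=1}^q J_1(\lambda_i)$ and $A_0$ is a square root of the nilpotent part $B_0=\bigoplus_{i=1}^s J_1(0)$. The argument then splits into analysing these two summands and reading off the sign characteristics from the fact that $A$ itself is $H$-nonnegative.

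For the positive part, I first note that because $A$ is $H$-nonnegative, Theorem~\ref{ThmHnonneg} forces the spectrum of $A$ to be real and $A$ to be diagonalizable at every nonzero eigenvalue, so $A_+$ is a direct sum of $1\times 1$ blocks $J_1(\mu)$ with $\mu^2=\lambda_i$. The key point --- the one genuinely new feature compared with the $H$-selfadjoint case of Theorem~\ref{EternalLoveAngelicaZapata}, where both $\pm\sqrt{\lambda_i}$ are admissible --- is that $\mu$ must be the positive root $\sqrt{\lambda_i}$. I would argue this via the behaviour of the sign characteristic under squaring: if $A$ had a negative eigenvalue $\mu<0$, then by Theorem~\ref{ThmHnonneg} the corresponding block carries negative sign characteristic, and squaring the pair $(J_1(\mu),-Q_1)$ produces a block $J_1(\mu^2)$ with $\mu^2>0$ and \emph{negative} sign characteristic in $(B,H)$; this contradicts the fact that the $H$-nonnegative matrix $B$ has only positive sign characteristic at its positive eigenvalues. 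Hence $A_+=\bigoplus_{i=1}^q J_1(\sqrt{\lambda_i})$, and each such block has positive sign characteristic, i.e.\ contributes $Q_1$ (equivalently, this is part (c) of \cite[Theorem~3.1]{MRR} combined with $H$-nonnegativity).

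For the nilpotent part I would invoke Theorem~\ref{ThmDescrHnon}: under the standing hypotheses every pair of the Segre pairing $\mathcal{S}$ is $(1,0)$ or $(1,1)$, so by Theorem~\ref{ThmDescrSquareRoot} and Corollary~\ref{CorOnetoOne} the Jordan form of $A_0$ consists of exactly $\ell_1$ blocks $J_1(0)$ (from the $(1,0)$ pairs) and $\ell_2$ blocks $J_2(0)$ (from the $(1,1)$ pairs). The sign characteristics then follow: since $A$ is $H$-nonnegative, each $J_2(0)$ block must have positive sign characteristic by Theorem~\ref{ThmHnonneg}, giving $Q_2$; and each $J_1(0)$ block arising from a $(1,0)$ pair is the zero square root acting on the same one-dimensional subspace on which $B$ has its corresponding $J_1(0)$ block, so its sign characteristic $\zeta_i$ is unchanged and equals the sign characteristic of that block of $B$ (equivalently, part (d) of \cite[Theorem~3.1]{MRR}). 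Collecting the positive and nilpotent summands and letting $P$ be the similarity that simultaneously brings $(A,H)$ to this block-diagonal form yields the asserted canonical form. The only nontrivial step is the positive-part sign argument above; everything else is bookkeeping on top of Theorems~\ref{ThmGenDecomp}, \ref{ThmHnonneg}, \ref{ThmDescrHnon} and Corollary~\ref{CorOnetoOne}.
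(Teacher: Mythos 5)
Your proposal is correct, but it is organized differently from the paper's proof, which is essentially a three-line, top-down specialization: since an $H$-nonnegative square root $A$ is in particular $H$-selfadjoint, the paper applies Theorem~\ref{EternalLoveAngelicaZapata} to $(A,H)$, removes the $J_3(0)$ blocks because $t=0$ (Theorem~\ref{ThmHnonConditions}), removes the $J_1(-\sqrt{\lambda_i})$ blocks because Theorem~\ref{EternalLoveAngelicaZapata} would assign them positive sign characteristic while Theorem~\ref{ThmHnonneg} forces an $H$-nonnegative matrix to have negative sign characteristic at negative eigenvalues, and then reads off all the signs from Theorem~\ref{EternalLoveAngelicaZapata} and the proof of Theorem~\ref{ThmHnonConditions}. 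You instead rebuild the statement bottom-up from the decomposition $A=A_0\oplus A_+$ of Theorem~\ref{ThmGenDecomp} and from Corollary~\ref{CorOnetoOne}. Your exclusion of negative eigenvalues of $A$ is a nice dual of the paper's argument: instead of clashing the sign characteristic of $A$ with the one prescribed by Theorem~\ref{EternalLoveAngelicaZapata}, you square a putative block $(J_1(\mu),-Q_1)$ with $\mu<0$ and clash the resulting negative sign at the positive eigenvalue $\mu^2$ with the $H$-nonnegativity of $B$ (indeed, for the corresponding eigenvector $v$ one has $[Bv,v]=\mu^2[v,v]<0$ outright). This buys a more elementary, self-contained treatment of the positive part, and your derivation of the $Q_2$ signs directly from Theorem~\ref{ThmHnonneg} applied to $A$ is cleaner than routing through the proof of Theorem~\ref{ThmHnonConditions}; what the paper's version buys is brevity.

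One step you should tighten: your justification of the signs $\zeta_i$ --- ``the zero square root acting on the same one-dimensional subspace on which $B$ has its corresponding $J_1(0)$ block'' --- is literally valid only for square roots that are direct sums aligned with the Jordan blocks of $B$, i.e.\ those in the explicit form of Theorem~\ref{ThmDescrHnon}. A general $H$-nonnegative square root of $B_0$ need not respect that coordinate decomposition; only its Jordan form corresponds to a Segre pairing (Corollary~\ref{CorOnetoOne}). What closes this for arbitrary $A$ is either your parenthetical citation of part (d) of \cite[Theorem~3.1]{MRR} --- which is exactly the paper's own justification, imported via Theorem~\ref{EternalLoveAngelicaZapata} --- or a short signature count: the root subspaces of $A$ and of $B=A^2$ at zero coincide, and $H$ restricted to this common subspace has signature $s_+-s_-$ when computed from the canonical form of $(B,H)$ (recall $t=0$) and signature equal to the sum of the $\zeta_i$ when computed from the canonical form of $(A,H)$ (the $Q_2$ blocks contribute zero). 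Together with $\ell_1=s-2\ell_2$ and the fact that each $(1,1)$ pair consumes one positive and one negative block of $B$, this forces the multiset of signs $\zeta_i$ to be exactly the signs of the blocks of $B$ occurring in $(1,0)$ pairs. With either repair, your proof is complete.
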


\begin{proof}
As  $H$-nonnegative square roots of $B$ are also $H$-selfadjoint, the form of $P^{-1}AP$ follows from
Theorem~\ref{EternalLoveAngelicaZapata} together with Theorem~\ref{ThmHnonConditions} and the fact
that the $H$-nonnegative matrix $A$ cannot contain Jordan blocks related with negative eigenvalues
of positive sign characteristic, see Theorem~\ref{ThmHnonneg}.
Moreover, Theorem~\ref{EternalLoveAngelicaZapata}  shows
also the sign characteristic related to the
blocks $J_1(\sqrt{\lambda_i})$ and $J_1(0)$.  The sign characteristic related to $J_2(0)$
was shown in the proof of Theorem~\ref{ThmHnonConditions}.
\end{proof}


\section{Stability of $H$-nonnegative square roots}
We give a definition for unconditional stability. Later, we relax this definition and consider conditional stability.

\begin{definition}
An $H$-nonnegative square root $A$ of an $H$-nonnegative matrix $B$ is called \emph{unconditionally stable} if for all $\varepsilon>0$ there exists a $\delta>0$ such that for all ${H}$-nonnegative matrices $\tilde{B}$ satisfying $\norm{B-\tilde{B}}<\delta$, $\tilde{B}$ has an ${H}$-nonnegative square root $\tilde{A}$ for which $\norm{A-\tilde{A}}<\varepsilon$.
\end{definition}

Let $B$ be an $H$-nonnegative matrix which satisfies the conditions in Theorem~\ref{ThmHnonConditions}. Assume that $(B,H)$ is in canonical form. Let $\mathcal{S}$ be a Segre pairing of the matrix $B$. Let $\ell_1$ be the number of $(1,0)$ pairs in $\mathcal{S}$ and $\ell_2$ the number of $(1,1)$ pairs. Then we can write
\begin{equation}\label{CuandoVeoATuMama}
B=B_1\oplus B_2\oplus B_3=\bigoplus_{i=1}^{\ell_1}J_1(0)\oplus\bigoplus_{i=1}^{\ell_2}\left(J_1(0)\oplus J_1(0)\right)\oplus\bigoplus_{i=1}^q J_1(\lambda_i),
\end{equation}
\begin{equation*}
H=H_1\oplus H_2\oplus H_3=\bigoplus_{i=1}^{\ell_1}\delta_iQ_1\oplus\bigoplus_{i=1}^{\ell_2}(Q_1\oplus -Q_1)\oplus\bigoplus_{i=1}^q Q_1,
\end{equation*}
where $\lambda_i>0$, $\delta_i=\pm 1$. Note that by Theorem~\ref{ThmDescrHnon} the sign characteristics of the different blocks in $B$ are as in Theorem~\ref{ThmDescrHsa}, but this can also be seen from  \cite[Theorem~3.1]{MRR}.

We follow the proof of  \cite[Theorem~3.3]{MRR} for $H$-selfadjoint square roots with a few adjustments to obtain the following result.

\begin{theorem}\label{ThmStab1Uncon}
Let $B$ be an $H$-nonnegative matrix. Then there exists an unconditionally stable $H$-nonnegative square root of $B$ if and only if in \eqref{CuandoVeoATuMama}
\begin{equation}\label{Maluma}
\ell_2=0 \quad \mbox{and} \quad \delta_i=1.
\end{equation}
\end{theorem}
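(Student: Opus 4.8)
The plan is to route both directions through a single observation: condition \eqref{Maluma} is equivalent to the intrinsic requirement $s_-=0$, i.e.\ that no Jordan block of $B$ at the eigenvalue zero carries negative sign characteristic. First I would verify this equivalence. In any Segre pairing a pair $(1,1)$ consumes exactly one positively and one negatively signed zero block, so $\ell_2>0$ forces $s_-\geq 1$; and a pair $(1,0)$ with $\delta_i=-1$ is itself a negatively signed zero block, so this too forces $s_-\geq 1$. Conversely, if $s_-=0$ there is no negative block available to be paired, which compels $\ell_2=0$ and every $\delta_i=1$. In particular the truth of \eqref{Maluma} depends only on $(B,H)$ and not on the chosen pairing $\mathcal S$, which is what makes the statement well posed (here $B$ is tacitly assumed to satisfy the hypotheses of Theorem~\ref{ThmHnonConditions}, so that \eqref{CuandoVeoATuMama} applies and an $H$-nonnegative root exists at all).

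For the sufficiency direction I would assume \eqref{Maluma}. Then every sign in \eqref{eq5HnonH} equals $+1$ and every block has size one, so $H=I$ and $B$ is an ordinary positive semidefinite matrix; accordingly the $H$-nonnegative matrices coincide with the positive semidefinite matrices. As candidate I would take the positive semidefinite square root $A=B^{1/2}$, which is exactly the $H$-nonnegative root attached to the all-$(1,0)$ pairing (cf.\ Theorem~\ref{ThmJordanFHnonSq} with $\ell_2=0$). By continuous functional calculus the map $X\mapsto X^{1/2}$ is continuous on the cone of positive semidefinite matrices, so for each $\varepsilon>0$ there is a $\delta>0$ such that $\norm{\tilde B-B}<\delta$ with $\tilde B$ positive semidefinite yields $\norm{\tilde B^{1/2}-B^{1/2}}<\varepsilon$. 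Since $\tilde B^{1/2}$ is again positive semidefinite, hence $H$-nonnegative, and squares to $\tilde B$, the root $A$ is unconditionally stable.

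For the necessity direction I would argue the contrapositive, showing that if \eqref{Maluma} fails then \emph{no} $H$-nonnegative square root of $B$ is unconditionally stable. By the equivalence above, failure means $s_-\geq 1$, so in the canonical form of Theorem~\ref{ThmHnonneg} there is a block $J_1(0)=[0]$ whose $H$-part is $-Q_1=[-1]$. Replacing this one block by $[-\epsilon]$ for small $\epsilon>0$ produces an $H$-selfadjoint matrix $\tilde B$ with $\norm{\tilde B-B}=\epsilon$; it remains $H$-nonnegative because the affected diagonal entry of $H\tilde B$ becomes $(-1)(-\epsilon)=\epsilon\geq 0$ while all other entries are unchanged, so $H\tilde B\geq 0$. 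But $\tilde B$ now has the negative eigenvalue $-\epsilon$, so by Theorem~\ref{ThmHnonConditions} it has no $H$-nonnegative square root whatsoever. Hence for any fixed $H$-nonnegative root $A$ of $B$ and any $\delta>0$ one finds such a $\tilde B$ within distance $\delta$ admitting no admissible $\tilde A$, so the defining requirement of unconditional stability fails for every $\varepsilon$.

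The main obstacle is conceptual rather than computational: one must recognise that instability is not driven by eigenvalues merely splitting, but by the indefiniteness of $H$ at a negatively signed zero block, which lets an arbitrarily small $H$-nonnegative perturbation push an eigenvalue strictly below zero and thereby destroy the very existence of an $H$-nonnegative root. The one point requiring care is checking that the perturbation stays inside the $H$-nonnegative class, i.e.\ $H\tilde B\geq 0$, and this is governed precisely by the sign condition $(-1)(-\epsilon)\geq 0$. The same computation explains why positively signed blocks are harmless: there an $H$-nonnegative entry $[c]$ must satisfy $(+1)c\geq 0$, so $c\geq 0$ and no negative eigenvalue can ever appear. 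This asymmetry is exactly why $s_-=0$, equivalently \eqref{Maluma}, is the sharp dividing line.
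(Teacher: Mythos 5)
Your proposal is correct, and it diverges from the paper's proof in a worthwhile way. On necessity your argument is essentially the paper's: the paper exhibits two explicit perturbation families, $B_1(a)=[-a]$ against a zero block with sign $-1$ and $B_2(a)=\left[\begin{smallmatrix} a & 0\\ 0 & -a\end{smallmatrix}\right]$ against a $(1,1)$ pair, and invokes Theorem~\ref{ThmHnonConditions} to conclude that the perturbed matrices admit no $H$-nonnegative root at all; your reformulation of \eqref{Maluma} as the intrinsic condition $s_-=0$ merely merges these two cases into a single perturbation of one negatively signed zero block, with the added benefit of showing that \eqref{Maluma} does not depend on the chosen Segre pairing --- a well-posedness point the paper leaves implicit. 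On sufficiency your route is genuinely different. The paper treats the blocks of \eqref{CuandoVeoATuMama} separately: a contour-integral (holomorphic functional calculus) resolvent estimate for the positive definite part $B_3$, and a separate scalar sequence argument for the zero part with positive signs. You instead observe that \eqref{Maluma} forces $H=I$ in canonical form, so that the $H$-nonnegative matrices are exactly the positive semidefinite ones, and then invoke continuity of $X\mapsto X^{1/2}$ on the whole positive semidefinite cone. This buys two things: the semidefinite and definite parts are handled uniformly, and --- more importantly --- your argument covers perturbations $\tilde B$ that do not respect the block decomposition (e.g.\ positive semidefinite perturbations coupling the kernel of $B$ with its positive part), which the paper's opening claim that the blocks ``can be considered separately'' glosses over in this direction. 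What the paper's contour-integral argument buys in exchange is an explicit Lipschitz-type bound $\norm{A_3-\tilde A_3}\leq M\norm{B_3-\tilde B_3}$ near positive definite matrices, whereas the cone-wide square root map is only H\"older-$\tfrac12$ continuous at the boundary of the cone.
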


\begin{proof}
The blocks $(B_1,H_1)$, $(B_2,H_2)$ and $(B_3,H_3)$ can be considered separately.
Consider the matrices $B_1=J_1(0)=[0]$ (associated with the pair $(1,0)$) and $H_1=-Q_1=[-1]$. Then $B_1$ is $H_1$-nonnegative. Let $B_1(a)=[-a]$ be a continuous family of  matrices with $a\in[0,1]$. Then $B_1(a)$ is $H_1$-nonnegative and $B_1(0)=B_1$. But $B_1(a)$ does not satisfy the conditions in Theorem~\ref{ThmHnonConditions} and therefore no $H_1$-nonnegative square root of $B_1(a)$ exists for $a>0$. This means that no $H_1$-nonnegative square root of $B_1$ is stable.

Now consider the matrices $B_2=J_1(0)\oplus J_1(0)=\left[\begin{smallmatrix}
0&0\\0&0
\end{smallmatrix}\right]$ (associated with the pair $(1,1)$) and
$$
H_2=Q_1\oplus -Q_1=\begin{bmatrix}
1&0\\0&-1
\end{bmatrix}.
$$
Let
$$
B_2(a)=\begin{bmatrix}
a&0\\0&-a
\end{bmatrix},\  a\in[0,1],
$$
be a continuous family of matrices. Then $B_2(a)$ is $H_2$-nonnegative and $B_2(0)=B_2$. Once again, though, $B_2(a)$ does not satisfy the conditions in Theorem~\ref{ThmHnonConditions}. Therefore $B_2(a)$ does not have an $H_2$-nonnegative square root for $a>0$ and this means that no $H_2$-nonnegative square root of $B_2$ is stable.
This implies~\eqref{Maluma}.

Conversely, let $B_3$ be an $H_3$-nonnegative matrix with only positive eigenvalues, thus $H_3$ is the identity
and $B_3$ is positive definite (see~\eqref{CuandoVeoATuMama}). Let $\Gamma$ be a closed contour which encloses all eigenvalues of $B_3$ and which lies in the open right half plane. Let $\delta>0$ be a constant such that all of the eigenvalues of any matrix $\tilde{B}_3$ for which $\norm{B_3-\tilde{B}_3}<\delta$ holds, are enclosed by $\Gamma$. Given an $H_3$-nonnegative matrix $\tilde{B}_3$ which satisfies $\norm{B_3-\tilde{B}_3}<\delta$.
Then $\tilde{B}_3$ is also positive definite.  Thus, $B_3$ and $\tilde{B}_3$ each has a unique positive definite square root $A_3$ and $\tilde{A}_3$, respectively.
Let $\sqrt{\cdot}$ has its branch cut along the negative real line.  Then
\begin{equation*}
A_3=\frac{1}{2\pi i}\int_\Gamma \sqrt{z}(zI-B_3)^{-1}dz\quad\textup{and}\quad \tilde{A}_3=\frac{1}{2\pi i}\int_\Gamma \sqrt{z}(zI-\tilde{B}_3)^{-1}dz.
\end{equation*}
Consider
\begin{eqnarray*}
\norm{A_3-\tilde{A}_3}&=&\abs{\frac{1}{2\pi i}}\norml{\int_\Gamma \sqrt{z}[(zI-B_3)^{-1}-(zI-\tilde{B}_3)^{-1}]dz}\\
&\leq& \abs{\frac{1}{2\pi i}}\int_\Gamma \norml{\sqrt{z}(zI-B_3)^{-1}(B_3-\tilde{B}_3)(zI-\tilde{B}_3)^{-1}}dz\\
&\leq&\norm{B_3-\tilde{B}_3}\abs{\frac{1}{2\pi i}}\norml{\int_\Gamma \sqrt{z}(zI-B_3)^{-1}(zI-\tilde{B}_3)^{-1}dz}\\
&=&M\norm{B_3-\tilde{B}_3},
\end{eqnarray*}
and note that $M$ is positive. Therefore we have that the $H_3$-nonnegative square root $A_3$ is stable.

For the case where $H_1=Q_1=[1]$, the matrix $B_1$ is also $H_1$-nonnegative and the only square root of $B_1$ is $A_1=[0]$. Consider a sequence converging to zero entailing only nonnegative numbers. Now form a sequence by taking the positive square root of each number in that sequence. Then this sequence also converges to zero. This is enough to prove that for every $B_k$ converging to $B_1$ there exists an $H_1$-nonnegative square root $A_k$ that converges to $A_1$, i.e.\ $A_1$ is stable.
\end{proof}

We now define conditional stability to ensure that the matrix close to $B$ admits an $H$-nonnegative square root.
\begin{definition}
An $H$-nonnegative square root $A$ of an $H$-nonnegative matrix $B$ is called \emph{conditionally stable} if for all $\varepsilon>0$ there exists a $\delta>0$ such that for all $H$-nonnegative matrices $\tilde{B}$  having at least one $H$-nonnegative square root, and  satisfying $\norm{B-\tilde{B}}<\delta$, $\tilde{B}$ has an $H$-nonnegative square root $\tilde{A}$ for which $\norm{A-\tilde{A}}<\varepsilon$.
\end{definition}

In the following theorem we find conditions for conditional stability of $H$-nonnegative square roots of $H$-nonnegative matrices. Again, we closely follow the proof for $H$-selfadjoint square roots in \cite{MRR}.

\begin{theorem}\label{ThmStab2Con}
Let $B$ be an $H$-nonnegative matrix. Then there exists a conditionally stable $H$-nonnegative square root $A$ of $B$ if and only if in \eqref{CuandoVeoATuMama}
$$
\ell_2=0.
$$
\end{theorem}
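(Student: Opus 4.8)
The plan is to reuse the block decomposition \eqref{CuandoVeoATuMama}, $B = B_1 \oplus B_2 \oplus B_3$, and to run the proof of Theorem~\ref{ThmStab1Uncon} block by block, the single but decisive change being that the competing matrices $\tilde B$ are now required to admit an $H$-nonnegative square root. First I would dispose of the positive-definite block exactly as before: its unique positive-definite square root $A_3$ depends continuously on $B_3$ through $A_3 = \frac{1}{2\pi i}\int_\Gamma \sqrt z\,(zI-B_3)^{-1}\,dz$, and the resolvent estimate already carried out in Theorem~\ref{ThmStab1Uncon} yields $\norm{A_3-\tilde A_3}\le M\norm{B_3-\tilde B_3}$. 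Hence $B_3$ never obstructs stability and the whole question reduces to the zero-eigenvalue part $B_1\oplus B_2$.

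For the necessity of $\ell_2=0$ I would argue through a single $(1,1)$-block. If $\ell_2>0$ there is a summand $B_2=J_1(0)\oplus J_1(0)=0$ with $H_2=Q_1\oplus(-Q_1)=\operatorname{diag}(1,-1)$, and by Theorem~\ref{ThmDescrHnon} the piece of any associated root $A$ living on this block is of the $J_2(0)$-type of Theorem~\ref{ThmDescrHsa}(ii), hence a \emph{fixed nonzero} matrix $A_2$. This is the one spot where the conditional setting departs from Theorem~\ref{ThmStab1Uncon}: the destabilising family used there, $\operatorname{diag}(a,-a)$, has a negative eigenvalue and so is now inadmissible. Instead I would take $\tilde B_2(a)=\operatorname{diag}(a,0)$, $a\downarrow 0$, which is $H_2$-nonnegative with the simple nonnegative spectrum $\{a,0\}$ and therefore \emph{does} admit the $H_2$-nonnegative square root $\operatorname{diag}(\sqrt a,0)$ — in fact its only one. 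Since $\operatorname{diag}(\sqrt a,0)\to 0\ne A_2$, testing on the $a$-eigenvector shows $\norm{\tilde A-A}$ stays bounded below by a positive constant for every root $\tilde A$ of $\tilde B=B_1\oplus\tilde B_2(a)\oplus(\text{remaining blocks})$, so no root with $\ell_2>0$ is conditionally stable.

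For the sufficiency I would assume $\ell_2=0$, so $B=B_1\oplus B_3$ with $B_1=\bigoplus_{i=1}^{\ell_1}J_1(0)$ carrying signs $\delta_i$, and propose the root $A=0\oplus A_3$. With $A_3$ already handled, it remains to prove that $A_1=0$ is conditionally stable, and this is precisely where the conditional hypothesis replaces the condition $\delta_i=1$ that was forced in Theorem~\ref{ThmStab1Uncon}: the families $\tilde B_1=[-a]$ that ruined unconditional stability at a $\delta_i=-1$ block have a negative eigenvalue, hence no $H_1$-nonnegative square root, and simply drop out of the admissible competitors. By Theorem~\ref{ThmHnonConditions} every admissible $\tilde B_1\to 0$ has nonnegative spectrum and no size-two nilpotent block, and the goal is to deduce that its $H_1$-nonnegative square root tends to $0=A_1$; assembling the blocks then gives $\tilde A\to A$.

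The hard part will be exactly this last step — controlling the zero block in the sign-indefinite directions of $H_1$. Here there is no resolvent bound to lean on, and a perturbation can a priori shift spectral mass between the $+1$ and $-1$ directions of $H_1$, so the entire argument must extract convergence of the square root solely from the standing hypothesis that $\tilde B$ admits an $H$-nonnegative square root (equivalently, nonnegative spectrum with no Jordan block $J_2(0)$ at $0$). Turning that hypothesis into a quantitative confinement of the spectrum and eigenvectors — and thereby producing the required $\delta(\varepsilon)$ — is where I expect the real difficulty to lie; it is also the step that must be handled with care lest mixed signs in $B_1$ leak a $J_2(0)$-type root back in, as the necessity construction already hints.
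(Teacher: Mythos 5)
Your necessity argument and your handling of the positive definite block coincide with the paper's proof: the paper uses exactly the family $\tilde B_2(a)=\mathrm{diag}(a,0)$, whose unique $H_2$-nonnegative square root $\mathrm{diag}(\sqrt a,0)$ tends to $0$, while by the Agreement every root associated with a $(1,1)$ pair has $\alpha\neq 0$; your eigenvector test is, if anything, more careful than the paper's block-by-block comparison, since it also disposes of square roots of $\tilde B$ that are not block diagonal. The genuine gap is the one you flag yourself: the sufficiency direction is never finished. You reduce it to proving that $A_1=0$ is a conditionally stable root of the nilpotent block $B_1$ when the signs $\delta_i$ are mixed, and you stop there. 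The paper does not solve that problem either --- it avoids it: its proof treats the $1\times1$ blocks of $B_1$ one at a time, quoting Theorem~\ref{ThmStab1Uncon} when $\delta_i=1$, and observing when $\delta_i=-1$ that the only $1\times1$ $H_1$-nonnegative matrix admitting an $H_1$-nonnegative square root is $[0]$ itself, so that the admissible competitors form the constant sequence $0,0,\ldots$. In other words, the paper only ever tests perturbations that are block diagonal conformally with \eqref{CuandoVeoATuMama}.

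Moreover, your instinct that sign-mixing perturbations are the real obstruction is correct in a strong sense, so the gap cannot be closed in the generality you set up. Take $\ell_1=2$, $\ell_2=0$, $B_1=0_2$, $H_1=\mathrm{diag}(1,-1)$, and for small $\epsilon>0$ put $a=\epsilon$, $c=\epsilon-\epsilon^3$, $v=(\sqrt a,\sqrt c)^*$, $\tilde B_1=H_1vv^*$. Then $H_1\tilde B_1=vv^*\geq 0$, $\norm{\tilde B_1}\leq 2\epsilon$, and $\sigma(\tilde B_1)=\{0,\,a-c\}$, so $\tilde B_1$ is $H_1$-nonnegative, diagonalizable with nonnegative spectrum, and by Theorem~\ref{ThmHnonConditions} it admits an $H_1$-nonnegative square root, for instance $(a-c)^{-1/2}H_1vv^*$. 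However, any $H_1$-nonnegative square root of $\tilde B_1$ has the form $H_1R$ with $R=\left[\begin{smallmatrix} r& s\\ \bar s& t\end{smallmatrix}\right]\geq 0$ and $RH_1R=vv^*$; the diagonal entries give $r^2=a+\abs{s}^2$ and $t^2=\abs{s}^2-c$, and positive semidefiniteness $rt\geq\abs{s}^2$ then forces $(a-c)\abs{s}^2\geq ac$, i.e.\ $\norm{H_1R}=\norm{R}\geq\abs{s}\geq\sqrt{(1-\epsilon^2)/\epsilon}$. Thus every $H_1$-nonnegative square root of $\tilde B_1$ blows up as $\epsilon\downarrow 0$, and no square root of $B_1$ is conditionally stable under the definition read literally, even though $\ell_2=0$. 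Consequently the sufficiency half of the theorem is only valid under the paper's implicit convention that the competing matrices $\tilde B$ respect the block structure of \eqref{CuandoVeoATuMama} (when all $\delta_i=1$ it is unrestrictedly true, since then $H=I$ and Theorem~\ref{ThmStab1Uncon} applies). To complete your proof you must either adopt that block-diagonal convention explicitly, as the paper tacitly does, or add such a restriction to the statement; the quantitative $\delta(\varepsilon)$ you were hoping to extract from the hypothesis ``$\tilde B$ admits an $H$-nonnegative root'' does not exist in the mixed-sign case.
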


\begin{proof}
Let $B_1=J_1(0)=[0]$ and $H_1=\delta Q_1=[\delta]$, with $\delta=\pm1$. The only square root of $B_1$ is $A_1=[0]$. If $\delta=1$, by  Theorem~\ref{ThmStab1Uncon},
 $A_1$ is unconditionally stable and, hence also conditionally stable.
 If $\delta=-1$, the only $H_1$-nonnegative matrix having an $H_1$-nonnegative square root, is the zero  matrix. Therefore consider the constant sequence $0,0,\ldots$. Then it follows that the $H_1$-nonnegative square root $A_1$ is conditionally stable.

Next, let $B_2=J_1(0)\oplus J_1(0)$ and $H_2=Q_1\oplus -Q_1$. We know from Theorem~\ref{ThmDescrHnon} that any $H_2$-nonnegative square root of $B_2$ is of the form
$$
A_2=\begin{bmatrix}
\alpha&-\bar{\beta}\\\beta&-\alpha
\end{bmatrix}\mbox{ for any } \alpha\in\mathbb{R},\ \beta\in\mathbb{C}\mbox{  with }\abs{\beta}=\pm\alpha.
$$
Note that, according to the Agreement in Section~\ref{SectionSqR}, we always consider $\alpha \neq 0$.
Consider a continuous family of $H_2$-nonnegative matrices
$$
B(a)=\begin{bmatrix}
a&0\\0&0
\end{bmatrix},\ a\in[0,1].
$$
 The only $H_2$-nonnegative square root of $B(a)$ is $\left[\begin{smallmatrix}
\sqrt{a}&0\\0&0
\end{smallmatrix}\right]$ but this matrix is equal to the zero matrix (and not $A_2$) if $a=0$. Therefore, no $H_2$-nonnegative square root of $B_2$ is conditionally stable.

Lastly, let $B_3$ be an $H_3$-nonnegative matrix with only positive eigenvalues.
By  Theorem~\ref{ThmStab1Uncon} there exists a
unconditionally stable $H_3$-nonnegative square root $A_3$
which is then  also conditionally stable.
\end{proof}

\paragraph{Acknowledgements:}
This work is based on research supported in part by the DSI-NRF Centre of Excellence in Mathematical and Statistical Sciences (CoE-MaSS). Opinions expressed and conclusions arrived at are those of the authors and are not necessarily to be attributed to the CoE-MaSS.





\end{document}